\documentclass[a4paper,12pt]{article}
\usepackage[hdivide={*,15.5cm,*},vdivide={*,24cm,*}]{geometry}
\usepackage[latin1]{inputenc}
\usepackage[english]{babel}
\usepackage[centertags]{amsmath}
\usepackage{amssymb,amsthm,latexsym}
\usepackage{footnote,color,cite,
            ifpdf,enumitem}
\usepackage[colorlinks=true,linkcolor=blue,
            citecolor=blue,urlcolor=blue]{hyperref}
\newcommand{\A}{\mathbb{A}}
\newcommand{\R}{\mathbb{R}}
\newcommand{\C}{\mathbb{C}}
\newcommand{\Z}{\mathbb{Z}}
\newcommand{\Q}{\mathbb{Q}}
\newcommand{\N}{\mathbb{N}}
\newcommand{\V}{\mathcal{V}}
\newcommand{\rk}{\operatorname{rk}}
\newcommand{\xon}{X_1,\ldots,X_n}

\newcommand{\Fon}{F_1,\ldots,F_n}
\newcommand{\iFon}{(F_1,\ldots,F_n)}

\newcommand{\Goq}{G_1,\ldots,G_q}
\newcommand{\bfs}[1]{\boldsymbol{#1}}
\newcommand{\klk}{,\ldots,}
\newtheorem{proposition}{Proposition}
\newtheorem{theorem}[proposition]{Theorem}
\newtheorem{corollary}[proposition]{Corollary}
\newtheorem{lemma}[proposition]{Lemma}
\newtheorem{remark}[proposition]{Remark}
\newtheorem{claim}[proposition]{Claim}
\theoremstyle{remark}
\newtheorem{example}[proposition]{Example}

\ifpdf
\pdfinfo{
  /Title (Degeneracy loci and polynomial equation solving)
}
\fi
\begin{document}
\selectlanguage{english}
\title{Degeneracy loci and polynomial equation solving$\,^{1}$}
\author{{\bf Bernd Bank$\,^{2}$, Marc Giusti$\,^{3}$, Joos
    Heintz$\,^{4}$,}\\ {\bf Gr\'egoire Lecerf$\,^{5}$, Guillermo
    Matera$\,^{6}$, Pablo Solern\'o$\,^{7}$ }}
%
%
\maketitle
\footnotetext{Communicated by Teresa Krick and James Renegar.}
\addtocounter{footnote}{1}\footnotetext{Research partially supported
  by the following Argentinian, French and Spanish grants: \\
  CONICET Res 4541-12, PIP 11220090100421 CONICET,
  UBACYT 20020100100945 and
  20020110100063, PICT--2010--0525, Digiteo DIM 2009--36HD ``MaGiX''
  grant of the R{\'e}gion Ile-de-France, ANR-2010-BLAN-0109-04
  ``LEDA'', MTM2010-16051.}
\addtocounter{footnote}{1}\footnotetext{Humboldt-Universit\"at zu
  Berlin, Institut f\"ur Mathematik, 10099 Berlin, Germany.
  \texttt{bank@mathematik.hu-berlin.de}}
\addtocounter{footnote}{1}\footnotetext{Laboratoire d'informatique
  LIX, UMR 7161 CNRS, campus de l'{\'E}cole polytechnique, 91128
  Palaiseau
  Cedex, France.
  \texttt{Marc.Giusti@Polytechnique.fr}}
\addtocounter{footnote}{1}\footnotetext{Departamento de Computaci\'on,
  Universidad de Buenos Aires and CONICET, Ciudad Univ., Pab.I, 1428
  Buenos Aires, Argentina, and Departamento de Matem\'aticas,
  Estad\'{\i}stica y Computaci\'on, Facultad de Ciencias, Universidad
  de Cantabria, 39071
  Santander, Spain.
  \texttt{joos@dc.uba.ar}}
\addtocounter{footnote}{1}\footnotetext{Laboratoire d'informatique
  LIX, UMR 7161 CNRS, campus de l'{\'E}cole polytechnique, 91128
  Palaiseau
  Cedex, France.
  \texttt{gregoire.lecerf@math.cnrs.fr}}
\addtocounter{footnote}{1}\footnotetext{Instituto del Desarrollo
  Humano, Universidad Nacional de General Sarmiento and CONICET,
  J. M. Gutierrez 1150 (B1613GSX) Los Polvorines, Buenos
  Aires, Argentina.
  \texttt{gmatera@ungs.edu.ar}}
\addtocounter{footnote}{1}\footnotetext{Instituto Matem\'atico Luis
  Santal\'o CONICET and Departamento de Matem\'aticas, Facultad de
  Ciencias Exactas y Naturales, UBA, 1428 Buenos Aires,
  Argentina.
  \texttt{psolerno@dm.uba.ar}}
\begin{center}
  Dedicated to Mike Shub on the occasion of his $70\,^{th}$
  birthday.
\end{center}
\bigskip
\begin{abstract}
  Let $V$ be a smooth equidimensional quasi-affine variety of
  dimension $r$ over $\C$ and let $F$ be a $(p\times s)$-matrix of
  coordinate functions of $\C[V]$, where $s\ge p+r$. The pair $(V,F)$
  determines a vector bundle $E$ of rank $s-p$ over $W:=\{x\in V \mid
  \rk F(x)=p\}$. We associate with $(V,F)$ a descending chain of
  degeneracy loci of $E$ (the generic polar varieties of $V$ represent
  a typical example of this situation).
  \par
  The maximal degree of these degeneracy loci constitutes the
  essential ingredient for the uniform, bounded error probabilistic
  pseudo-polynomial time algorithm which we are going to design and
  which solves a series of computational elimination problems that can
  be formulated in this framework. We describe applications to
  polynomial equation solving over the reals and to the computation of
  a generic fiber of a dominant endomorphism of an affine space.
\end{abstract}
\noindent {\bf Keywords} Polynomial equation solving $\cdot$
Pseudo-polynomial complexity $\cdot$ Degeneracy locus $\cdot$ Degree
of varieties
\par\bigskip\noindent {\bf Mathematics Subject Classification (2010)}
14M10 $\cdot$ 14M12 $\cdot$ 14Q20 $\cdot$ 14P05 $\cdot$ 68W30
\section{Introduction}
\label{s:0}
Let $V$ be a smooth and equidimensional quasi-affine variety over $\C$
of dimension $r$ and let $F$ be a $(p\times s)$-matrix of coordinate
functions of $\C[V]$, where $s\ge p+r $. Then $F$ determines a vector
bundle $E$ of rank $s-p$ over $W:=\{x\in V \mid \rk F(x)=p\}$. With
$E$ and a given \emph{generic} matrix $a\in \C^{(s-p)\times s}$ we
associate a descending chain of degeneracy loci of $E$. The generic
polar varieties constitute a typical example of this situation.
\par
We prove that these degeneracy loci are empty or equidimensional,
normal and Cohen-Macaulay. Moreover, if $b$ is another generic matrix,
the degeneracy loci associated with $a$ and $b$ are rationally
equivalent and their equivalence classes can be expressed in terms of
the Chern classes of $E$. Not the rational equivalence classes, but
the degeneracy loci themselves constitute a useful tool to solve
efficiently certain computational elimination tasks associated with
suitable quasi-affine varieties and matrices~$F$. Such elimination
tasks are for example real root finding in reduced complete
intersection varieties with a smooth and compact real trace or the
problem to describe efficiently a generic fiber of a given bi-rational
endomorphism of an affine space.

In a somewhat different context of effective elimination theory,
rational equivalence classes of degeneracy loci were considered in
\cite{BuLo07}.
\subsection{Contributions}
The main contribution of this paper is a new algorithm which solves
the above mentioned and other elimination tasks in uniform, bounded
error probabilistic \emph{pseudo-polynomial time}. In this sense it
belongs to the pattern of elimination procedures which became
introduced in symbolic semi-numeric computation by the already
classical Kronecker algorithm~\cite{GiHeHaeMorMonPa97, GiHeMorMoPa98,
  HeMaWa01, GiLeSa01, CaMat06, DurLe08}. Here we refer to procedures
whose inputs are measured in the usual way by syntactical, extrinsic
parameters and, besides of them, by a semantical, intrinsic parameter
which depends on the geometrical meaning of the input and may become
exponential in terms of the syntactical parameters. A procedure is
called \emph{pseudo-polynomial} if its time complexity is polynomial
in both, the syntactical and semantical parameters. In this sense, the
semantical parameter that controls the complexity of our main
algorithm is the maximal degree of the degeneracy loci which we
associate with the given elimination task.
\par
The particular feature of this algorithm is that the input polynomials
of the elimination task under consideration may be given by an
essentially division-free arithmetic circuit (which means that only
divisions by scalars are allowed) of size $L$. The complexity of the
algorithm becomes then of order $L(snd)^{O(1)}\delta^2$, where $n$ is
the number of indeterminates of the input polynomials, $d$ their
maximal degree, $s$ the number of columns of the given matrix and
$\delta$ is essentially the maximal degree of the degeneracy loci
involved. In worst case this complexity is of order
$(s(nd)^n)^{O(1)}$. General degeneracy loci constitute an important
instance where we are able to achieve, as a generalization
of~\cite{GiLeSa01}, a complexity bound of order square~$\delta$. At
present no other elimination procedure reaches such a sharp bound. In
particular we do not rely on equidimensional decomposition whose best
known complexity is of order cube~$\delta$
(see~\cite[Theorem~8]{SafeyTrebuchet2006} for an application to polar
varieties). For comparisons with the complexity of Gr\"obner basis
algorithms we refer to~\cite{Bardet2004}. Without going into the
technical details we indicate also how our algorithm may be realized
in the non-uniform deterministic complexity model by algebraic
computation trees. We implemented our main algorithm within the
\texttt{C++} library \texttt{geomsolvex} of
\textsc{Mathemagix}~\cite{Mathemagix}.
\par
In Section~\ref{s:1} we present some of the basic mathematical facts
concerning the geometry of our degeneracy loci which will be used in
Section~\ref{s:2} to develop our main algorithm. The proofs, which
are all self-contained except one, require only some knowledge of
classical algebraic geometry and commutative algebra which can be
found, e.g., in~\cite{Mumford88,
  Shafarevich94, Matsumura86}, elementary properties of vector bundles
over algebraic varieties~\cite{ShafarevichII94}, the Thom-Porteous
formula~\cite[Chapter~14]{Fulton84}, and the notion of linear
equivalence of cycles~\cite[Chapter~1]{Fulton84}. The main algorithm
requires some familiarity with the classical version of the
Kronecker algorithm~\cite{GiLeSa01,DurLe08} and with algebraic
complexity~\cite{BuClSh97}.
\subsection{Notions and notations}
\label{ss:0.1}
We shall use freely standard notions, notations and results of
classic algebraic geometry, commutative algebra and algebraic
complexity theory which can be found, e.g., in the
books~\cite{Mumford88, Shafarevich94, Matsumura86, BuClSh97}.
\par
Let $\Q$ and $\C$ be the fields of rational and complex numbers, let
$\xon$ be indeterminates over $\C$ and let be given polynomials
$\Goq$, and $H$ in $\C[\xon]$. By $\A^n$ we denote the
$n$-dimensional affine space over $\C$. We shall use the following
notations:
\[
\{G_1=0\klk G_q=0\}:=\{x\in \A^n \mid G_1(x)=0\klk G_q(x)=0\}
\]
and
\[
\{G_1=0\klk G_q=0\}_H:=\{x\in \A^n \mid G_1(x)=0\klk G_q(x)=0,
H(x)\neq 0\}.
\]
Suppose $1\le q \le n$ and that $\Goq$ form a regular sequence in
the localized ring $\C[\xon]_H$. We call it \emph{reduced} outside of $\{H=0\}$ if for
any index $1\le k\le q$ the ideal $(G_1\klk G_k)_H$ is radical in
$\C[\xon]_H$. Let $V$ be the quasi-affine subvariety of the ambient
space $\A^n$ defined by $G_1=0\klk G_q=0$ and $H\neq 0$,
\emph{i.e.},
\[
V:=\{G_1=0\klk G_q=0\}_H.
\]
By $\C[V]$ we denote the coordinate ring of $V$ whose elements we call
the coordinate functions of $V$. We adopt the same notations of $V$ as
we did for $V:=\A^n$.
\par
Suppose for the moment that $V$ is a closed subvariety of $\A^n$,
\emph{i.e.}, $V$ is of the form $V=\{G_1=0\klk G_q=0\}$. For $V$
irreducible we define its degree $\deg V$ as the maximal number of
points we can obtain by cutting $V$ with finitely many affine
hyperplanes of $\C^n$ such that the intersection is finite. Observe
that this maximum is reached when we intersect $V$ with dimension of
$V$ many \emph{generic} affine hyperplanes of $\C^n$. In case that
$V$ is not irreducible let $V=C_1\cup \cdots \cup C_s$ be the
decomposition of $V$ into irreducible components.  We define the
degree of $V$ as $\deg V:=\sum _{1\le j \le s}\deg C_j$. With this
definition we can state the so-called \emph{B\'ezout Inequality}: if
$V$ and $W$ are closed subvarieties of $\C^n$, then we have
\[\deg (V\cap W)\le \deg V\cdot \deg W.\]
If $V$ is a hypersurface of $\C^n$ then its degree equals the degree
of its minimal equation. The degree of a point of $\C^n$ is just
one. For more details we refer to~\cite{Heintz83, Fulton84, Vogel84}.
\section{Degeneracy loci}
\label{s:1}
We present the mathematical tools we need for the design of our main
algorithm in Section~\ref{s:2}. Proposition~\ref{p:1} and Theorem
\ref{t:2} below are not new. They can be extracted from existing
results of modern algebraic geometry. Since we use the ingredients of
our argumentation for these statements otherwise, we give new
elementary proofs of them. This makes our exposition self-contained.
%

Let $V$ be a quasi-affine variety and suppose that $V$ is smooth and
equidimensional of dimension $r$. The following constructions,
statements and proofs generalize basic arguments of~\cite{BaGiHePa04,
  BaGiHePa05, BaGiHeSaSc10}. Let $p$ and $s$ be natural numbers with
$s\ge p+r$. We suppose that there is given a $(p\times s)$-matrix of
coordinate functions of $V$, namely
\[
F:=\begin{bmatrix}
f_{1,1}&\cdots & f_{1,s}\\
\vdots&      & \vdots\\
f_{p,1}&\cdots & f_{p,s}
\end{bmatrix}\in \C[V]^{p\times s}.
\]
For $x\in V$ we denote by $\rk F(x)$ the rank of the complex $(p\times
s)$-matrix $F(x)$. Let $W:=\{x\in V \mid \rk F(x)=p\}$ and observe
that $W$ is an open, not necessarily affine, subvariety of $V$ which
is covered by canonical affine charts given by the $p$-minors of~$F$.

Let $E:= \{(x,y)\in W\times \A^s \mid F(x)\cdot y^T=0\}$ and $\pi:E
\to W$ be the first projection (here~$y^T$ denotes the transposed
vector of $y$). One sees easily that $\pi$ is a vector bundle of
rank $s-p$. We call $\pi$ (or $E$) the vector bundle associated with
the pair $(V,F)$. Let us fix for the moment a complex $((s-p)\times
s)$-matrix
\[
a:=\begin{bmatrix}
a_{1,1}&\cdots & a_{1,s}\\
\vdots&      & \vdots\\
a_{s-p,1}&\cdots & a_{s-p,s}
\end{bmatrix}\in \C^{(s-p)\times s}
\]
with $\rk a=s-p$. For $1\le i \le r+1$, let
\[
a_i:=\begin{bmatrix}
a_{1,1}&\cdots & a_{1,s}\\
\vdots&      & \vdots\\
a_{s-p-i+1,1}&\cdots & a_{s-p-i+1,s}
\end{bmatrix}\in \C^{(s-p-i+1)\times s}.
\]
We have $\rk a_i=s-p-i+1$. Let
\[
 T(a_i):=
\begin{bmatrix}
F\\a_i
\end{bmatrix} \in \C[V]^{(s-i+1)\times s}\quad\text{and}\quad
W(a_i):= \{x\in W \mid \rk T(a_i)(x)< s-i+1\}.
\]
Applying~\cite[Theorem~3]{EaNo62}
or~\cite[Theorem~13.10]{Matsumura86} to each canonical affine chart
of $W$ we conclude that any irreducible component of $W(a_i)$ has
codimension at most $i$ in~$W$. For $1\le i \le r$, the locally
closed algebraic varieties $W(a_i)$ form a descending chain
\[W \supseteq W(a_1) \supseteq \cdots \supseteq W(a_r). \]
We call the algebraic variety $W(a_i)$ the \emph{$i$-th degeneracy
  locus} of the pair $(V,F)$ associated with $a$.
\par
The vector bundle $E$ is a subbundle of $W\times \A^s$. Fix $1\le i
\le r$. Then the matrix $a_i$ defines a bundle map $W\times \A^s \to
W\times \A^{s-p-i+1}$ which associates with each $(x,y)\in W\times
\A^s$ the point $(x,a_i\cdot y^T)\in W\times \A^{s-p-i+1}$. By
restriction we obtain a bundle map $\varphi_i:E\to W\times
\A^{s-p-i+1}$ whose critical locus we are going to identify with
$W(a_i)$. First we observe that $(x,y)\in E$ is a critical point of
$\varphi_i$ if, and only if, any point of the fiber $E_x$ of $E$ at
$x$ is critical for $\varphi_i$. Thus the property of being a critical
point of $\varphi_i$ depends only on the fiber. We say that $x\in W$
is \emph{critical} for $\varphi_i$ if this map is critical on
$E_x$. One verifies easily by direct computation that the degeneracy
locus $W(a_i)$ is the set of critical points of $W$ for
$\varphi_i$. In this sense $W(a_i)$ is a degeneracy locus of
$\varphi_i$~\cite[Chapter~14]{Fulton84}.
\begin{example}
  We are going to visualize our setup by a simple example. Consider
  the polynomial $G:= X_1^2 + X_2^2 + X_3^2 - 1\in\C[X_1,X_2,X_3]$.
  Then $V:= \{ G = 0 \}$ is an irreducible subvariety of $\A^3$ which
  is smooth of dimension $r:=2$. Let $F$ be the gradient of $G$
  restricted to $V$, and let $p:= 1$ and $s:= 3$. Thus $s=p+r$. For
  $\pi_1, \pi_2, \pi_3$ being the coordinate functions of $\C[V]$
  induced by $X_1,X_2,X_3$ and for
  \[
   \begin{bmatrix}
    a_{1,1}& a_{1,2}& a_{1,3}\\
    a_{2,1}& a_{2,2}& a_{2,3}
  \end{bmatrix}\in\C^{2\times 3}
  \]
  generic, we have
  \[ F =
  \begin{bmatrix}
    2 \pi_1 & 2 \pi_2 & 2 \pi_3
  \end{bmatrix}, \qquad
  T(a_1) =
  \begin{bmatrix}
    2 \pi_1 & 2 \pi_2 & 2 \pi_3 \\
    a_{1,1}& a_{1,2}& a_{1,3}\\
    a_{2,1}& a_{2,2}& a_{2,3}
  \end{bmatrix}, \qquad
  T(a_2) =
  \begin{bmatrix}
    2 \pi_1 & 2 \pi_2 & 2 \pi_3 \\
    a_{1,1}& a_{1,2}& a_{1,3}
  \end{bmatrix}.
  \]
  One verifies easily that $W= V$, $W(a_1)= \{ x \in V \mid \det
  (T(a_1))= 0 \}$ and $W(a_2)= \{ x \in V \mid x=(x_1,x_2,x_3),
  a_{1,2} x_1 - a_{1,1} x_2 = 0, a_{1,3} x_1 - a_{1,1} x_3 = 0,
  a_{1,3} x_2 - a_{1,2} x_3 = 0\}$ holds. Since the matrix
  $[a_{i,j}]_{1\le i\le 2,1\le j\le 3}$ is generic by assumption, we conclude
  that $W(a_1)$ is equidimensional of dimension one and that
  $W(a_2)$ is the classical polar variety of the sphere $V$, which
  can be parameterized in the following way:
  \[
  W(a_2)= \left\{ \left(\frac{a_{1,1}^2}{a_{1,3}^2} +
    \frac{a_{1,2}^2}{a_{1,3}^2} + 1 \right) X_3^2 - 1 = 0,
  X_1- \frac{a_{1,1}}{a_{1,3}} X_3=0, X_2- \frac{a_{1,2}}{a_{1,3}} X_3=
  0 \right\}.
  \]
\end{example}
\subsection{The dimension of a degeneracy locus}
\label{s:1.2}
We are now going to show that, for a \emph{generic} matrix $a$, the
degeneracy locus $W(a_i)$ is either empty or of expected pure
codimension $i$ in $W$ (see Proposition~\ref{p:1} below). Our
considerations will only be local. Therefore it suffices to consider
the items we are going to introduce now. Let
\[
\Delta:=\det \begin{bmatrix}
f_{1,1}&\cdots & f_{1,p}\\
\vdots &   &\vdots\\
f_{p,1}&\cdots & f_{p,p}\end{bmatrix}.
\]
For $1\le i \le r$, let
\[
m_i:=\det \begin{bmatrix}
f_{1,1}&\cdots & f_{1,s-i}\\
\vdots &    &\vdots\\
f_{p,1}&\cdots & f_{p,s-i}\\
a_{1,1}&\cdots & a_{1,s-i}\\
\vdots &     &\vdots\\
a_{s-p-i,1}&\cdots & a_{s-p-i,s-i}\\
\end{bmatrix}.
\]
Thus $m_i$ is the upper-left corner $(s-i)$-minor of the
$((s-i+1)\times s)$-matrix $T(a_i)$. Further, let
\[M_{s-i+1}\klk M_{s}\] be the $(s-i+1)$-minors of the matrix $T(a_i)$
given by the columns numbered $1\klk s-i$ to which we add, one by one,
the columns numbered $s-i+1 \klk s$. Observe that
\[
{W(a_i)}_{\Delta}:=\{x\in W(a_i) \mid \Delta(x)\neq 0\}
\]
is an affine chart of the degeneracy locus $W(a_i)$. The
Exchange Lemma in~\cite{BaGiHeMb01} implies
\[
{W(a_i)}_{\Delta \cdot m_i}=\{ M_{s-i+1}=0 \klk
M_{s}=0\}_{\Delta\cdot m_i}.
\]
Let $Z_{s-i+1}\klk Z_{s}$ be new indeterminates and
$\widetilde{M}_{s-i+1}\klk \widetilde{M}_{s}$ be the $(s-i+1)$-minors
of the matrix
\[
\begin{bmatrix}
f_{1,1}& \cdots & f_{1,s-i} & f_{1,s-i+1}& \cdots & f_{1,s}\\
\vdots & & \vdots & \vdots & &\vdots\\
f_{p,1}& \cdots & f_{p,s-i} & f_{p,s-i+1} &\cdots & f_{p,s}\\
a_{1,1} & \cdots & a_{1,s-i} & a_{1, s-i+1}& \cdots & a_{1,s}\\
\vdots & & \vdots & \vdots & &\vdots\\
a_{s-p-i,1}& \cdots & a_{s-p-i, s-i} &  a_{s-p-i, s-i+1} & \cdots & a_{s-p-i,s}\\
a_{s-p-i+1,1} & \cdots & a_{s-p-i+1,s-i}& Z_{s-i+1} &\cdots & Z_s
\end{bmatrix}
\]
given by the columns numbered $1\klk s-i$ to which we add, one by one,
the columns numbered $s-i+1 \klk s$. We consider now the morphism
$\Phi_i:V_{m_i}\times \A^i \longrightarrow \A^i$ of smooth algebraic
varieties defined for $x\in V_{m_i}$ and $z\in \A^i$ by $(x,z)\mapsto
\Phi_i(x,z):=(\widetilde{M}_{s-i+1}(x,z)\klk \widetilde{M}_{s}(x,z)).$
\begin{lemma}\label{l:1}
  The origin $(0\klk 0)$ of $\A^i$ is a regular value of $\Phi_i$.
\end{lemma}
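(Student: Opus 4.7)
The plan is to exploit the very special way in which the new indeterminates $Z_{s-i+1},\ldots,Z_s$ enter the enlarged matrix: they appear only in its last row, and $Z_{s-i+1+j}$ sits solely in column $s-i+1+j$. I would therefore first show that $\Phi_i$ is already a submersion in the $Z$-direction on all of $V_{m_i}\times\A^i$, which trivially implies that every value in $\A^i$, and in particular the origin, is regular.

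More concretely, for each $j=0,\ldots,i-1$, the minor $\widetilde{M}_{s-i+1+j}$ is built from the first $s-i$ columns together with column $s-i+1+j$. Looking at the last row of the corresponding $(s-i+1)\times(s-i+1)$ submatrix, only one entry, namely $Z_{s-i+1+j}$ in its final column, depends on any $Z$-variable. I would expand $\widetilde{M}_{s-i+1+j}$ along this last row; by cofactor expansion its coefficient as a polynomial in $Z_{s-i+1+j}$ is, up to sign, precisely the $(s-i)$-minor of the upper-left block of the enlarged matrix, which is exactly $m_i$. Hence
\[
\widetilde{M}_{s-i+1+j}=\pm\, m_i\, Z_{s-i+1+j}+R_j,
\]
where $R_j$ is independent of all the $Z_{s-i+1},\ldots,Z_s$.

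Consequently, the partial Jacobian of $\Phi_i$ with respect to $(Z_{s-i+1},\ldots,Z_s)$ is, up to signs, the diagonal matrix $m_i\cdot I_i$. On the open set $V_{m_i}$ we have $m_i\neq 0$, so this $i\times i$ block is invertible at every point $(x,z)\in V_{m_i}\times\A^i$. The full differential $d\Phi_i(x,z)\colon T_x V_{m_i}\oplus T_z\A^i\longrightarrow\A^i$ therefore already surjects onto $\A^i$ through its $\A^i$-summand.

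Hence $\Phi_i$ is a submersion on all of $V_{m_i}\times\A^i$, which means that every point of $\A^i$ is a regular value of $\Phi_i$; in particular the origin is. I do not foresee any real obstacle here: the only point demanding a bit of care is the bookkeeping in the cofactor expansion to confirm that the coefficient of $Z_{s-i+1+j}$ is indeed $\pm m_i$ independent of $j$, and that no other $Z$-variable sneaks into $\widetilde{M}_{s-i+1+j}$; both facts are immediate from the shape of the enlarged matrix.
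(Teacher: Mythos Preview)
Your proof is correct and follows essentially the same approach as the paper: both compute the partial Jacobian of $\Phi_i$ with respect to the variables $Z_{s-i+1},\ldots,Z_s$ and observe that it equals $m_i\cdot I_i$, which is invertible on $V_{m_i}$. Your observation that this makes $\Phi_i$ a submersion everywhere (so that \emph{every} value is regular, not just the origin) is a harmless strengthening of what the paper states.
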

\begin{proof}
  Without loss of generality we may assume that $\Phi_i^{-1}(0\klk 0)$
  is nonempty.  Let $x\in V_{m_i}$ and $z\in \A^i$ with
  $\Phi_i(x,z)=(0\klk 0)$ be arbitrarily chosen.  Observe that the
  Jacobian of $\Phi_i$ at $(x,z)$ is a matrix with $i$ rows of the
  following form:
  \[
  \begin{bmatrix}
    *& \cdots & * & m_i(x) &0&\cdots & 0\\
    *& \cdots & * & 0 & m_i(x)&\ddots &\vdots\\
    \vdots&  & \vdots& \vdots& \ddots& \ddots & 0 \\
    *& \cdots & * & 0 & \cdots& 0& m_i(x)
  \end{bmatrix}.
  \]
  Since $x$ belongs to $V_{m_i}$ we conclude that $(x,z)$ is a regular
  point of $\Phi_i$. The arbitrary choice of $(x,z)$ in
  $\Phi_i^{-1}(0\klk0)$ implies now Lemma~\ref{l:1}.
\end{proof}
{From} the Weak Transversality Theorem of Thom-Sard (see, e.g.,
\cite[Theorem~III.7.4]{Demazure89}) we deduce now that there exists
a nonempty Zariski open set $\Omega$ of $\A^i$ such that for any
point $z\in \Omega$ the equations $\widetilde{M}_{s-i+1}(x, z)=0\klk
\widetilde{M}_{s}(x, z)=0$ intersect transversally at any common
zero belonging to $V_{m_i}$. From now on we shall choose the complex
$((s-p)\times s)$-matrix $a$ generically proceeding step by step
from row numbered one until row numbered $s-p$. With this choice in
mind we may suppose without loss of generality that the equations
$M_{s-i+1}=0\klk M_{s}=0$ intersect transversally at any of their
common zeros belonging to $V_{m_i}$. In particular,
${W(a_i)}_{\Delta\cdot m_i}=\{M_{s-i+1}=0\klk M_{s}=0\}_{\Delta\cdot
  m_i}$ is either empty or of pure codimension $i$ in $W_{\Delta\cdot
  m_i}$.
\begin{proposition}[\!\!{\cite[Transversality Lemma~1.3~(i)]{Piene78}}]
\label{p:1} For $1\le i \le r$ and a generic matrix $a\in \mathbb{C}^{(s-p)\times s}$, the
  $i$-th degeneracy locus $W(a_i)$ is empty or of pure codimension $i$
  in $W$.
\end{proposition}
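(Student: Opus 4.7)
The plan is to globalize the local transversality statement of Lemma~\ref{l:1} by a finite affine cover of $W$ and then to close a downward induction on $i$. The local argument preceding the proposition treats only the distinguished chart $W_{\Delta\cdot m_i}$, but both the Jacobian computation of Lemma~\ref{l:1} and the surrounding Exchange-Lemma bookkeeping are invariant under the natural action permuting the $s$ columns of $F$ and the $s-p$ rows of $a$. Consequently, for every size-$p$ subset $I\subset\{1\klk s\}$ and every size-$(s-i)$ subset $J\supset I$, the same analysis produces an affine chart $W_{\Delta_I\cdot m_J}$ on which $W(a_i)$ is cut out by $i$ minors that meet transversally, provided $a$ lies in a certain Thom--Sard-type Zariski-open subset $\Omega_{I,J}\subset\C^{(s-p)\times s}$. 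Only finitely many pairs $(I,J)$ appear, so $\bigcap_{I,J}\Omega_{I,J}$ is itself Zariski-open and non-empty; fixing $a$ in this intersection secures transversality on all charts at once.

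I introduce the open stratum $W(a_i)^{\circ}:=\{x\in W\mid\rk T(a_i)(x)=s-i\}$. Every point of $W(a_i)^{\circ}$ carries some non-vanishing $p$-minor of $F$ and some non-vanishing $(s-i)$-minor of $T(a_i)$ extending it, hence lies in one of the charts above, and the first paragraph realises $W(a_i)^{\circ}$ on that chart as a transverse intersection of $i$ equations inside $V$. Hence $W(a_i)^{\circ}$ is either empty or smooth of pure codimension $i$ in $W$. Tautologically one has the decomposition $W(a_i)=W(a_i)^{\circ}\cup W(a_{i+1})$, where $W(a_{i+1})$ is the locus along which the rank of $T(a_{i+1})$ drops further; it is convenient to extend this notation to all $i\le s-p+1$ by letting $a_{s-p+1}$ have zero rows.

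A downward induction on $i$ now closes the proof. The base case is $i=s-p+1$: here $T(a_i)=F$ has rank $p=s-i+1$ everywhere on $W$, so $W(a_{s-p+1})=\emptyset$. Assuming inductively that $W(a_{i+1})$ is empty or pure of codimension $i+1$, consider any irreducible component $C$ of $W(a_i)$. Either $C\cap W(a_i)^{\circ}$ is non-empty, in which case $C=\overline{C\cap W(a_i)^{\circ}}$ has codimension $i$ by the second paragraph; or $C\subseteq W(a_{i+1})$, and the inductive hypothesis gives $\operatorname{codim}_W C\ge i+1$, contradicting the Eagon--Northcott upper bound $\operatorname{codim}_W C\le i$ already recorded in the text. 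Every component of $W(a_i)$ therefore has codimension exactly $i$, proving the proposition. The delicate point of the plan is the first paragraph, where one must promote the chart-wise local statement to a single $a$ valid on the finite cover $\{W_{\Delta_I\cdot m_J}\}_{I,J}$; once this globalization is secured, the combination of the stratification and the Eagon--Northcott squeeze closes the argument with no further technicalities.
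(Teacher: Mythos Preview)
Your proof follows the same architecture as the paper's: establish on each chart that $W(a_i)$ is a transversal complete intersection of $i$ minors (via Lemma~\ref{l:1} and Thom--Sard), then close with a downward induction pitting the stratification $W(a_i)=(\text{open part})\cup W(a_{i+1})$ against the Eagon--Northcott bound. The paper phrases the induction as a maximal-counterexample contradiction and starts the base at $i=r$, whereas you run a clean downward induction from $i=s-p+1$; your base is trivially empty and absorbs the paper's separate argument for $W(a_{r+1})_\Delta=\emptyset$ into the main loop, which is a pleasant streamlining (it does require Lemma~\ref{l:1} for $r<i\le s-p$, but the proof of that lemma goes through unchanged in this range).

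There is one genuine wrinkle in your write-up, precisely at the ``delicate point'' you flag. With the charts indexed only by column subsets $(I,J)$, the minor $m_J$ is a maximal minor of $T(a_{i+1})$, so the charts $W_{\Delta_I\cdot m_J}$ cover $W\setminus W(a_{i+1})$, not your stratum $W(a_i)^{\circ}=\{\rk T(a_i)=s-i\}$: a point can satisfy $\rk T(a_i)(x)=s-i$ while $\rk T(a_{i+1})(x)=s-i-1$, and such a point lies in no chart of your family. Two easy fixes are available. Either (a) enlarge the chart family to allow dropping any one of the $s-p-i+1$ rows of $a_i$ (not just the last), which your remark about permuting the rows of $a$ anticipates but your index set $(I,J)$ does not implement; then every point of $W(a_i)^{\circ}$ carries a nonvanishing $(s-i)$-minor of $T(a_i)$ using all $p$ rows of $F$, and Lemma~\ref{l:1} applies with the dropped row playing the role of $Z$. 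Or (b) simply replace $W(a_i)^{\circ}$ by $W(a_i)\setminus W(a_{i+1})$ throughout: this set \emph{is} covered by the column-indexed charts, the decomposition $W(a_i)=(W(a_i)\setminus W(a_{i+1}))\cup W(a_{i+1})$ is tautological, and your third paragraph goes through verbatim. Option~(b) is exactly what the paper does.
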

\begin{proof}
  Let $C$ be an irreducible component of $W(a_i)$ not contained in
  $W(a_{i+1})$. Without loss of generality we may assume that
  $\Delta\cdot m_i$ does not vanish identically on $C$. Therefore
  $C_{\Delta\cdot m_i}$ is an irreducible component of
  $W(a_i)_{\Delta\cdot m_i}$. Hence, $C_{\Delta\cdot m_i}$ is of
  codimension $i$ in $W_{\Delta\cdot m_i}$. This implies that the
  codimension of $C$ in $W$ is also $i$.
  \par
  Let us consider the case $i=r$. By induction on $1\le j \le s-p-r$
  we conclude in the same way as in the proof of Lemma~\ref{l:1} and
  the observations following it that for any point $x$ of $W_{\Delta}$
  there exists a $(p+j)$-minor corresponding to $p+j$ columns,
  including those numbered $1\klk p$, of the matrix
  \[
  \begin{bmatrix}
    &F&\\
    a_{1,1}&\cdots & a_{1,s}\\
    \vdots&      & \vdots\\
    a_{j,1}&\cdots & a_{j,s}\\
  \end{bmatrix}
  \]
  which does not vanish at $x$. This implies that
  $W(a_{r+1})_{\Delta}$ is empty. Thus $W(a_r)_{\Delta}$, and hence
  $W(a_r)$, is empty or of pure codimension $r$ in $W$. This shows
  Proposition~\ref{p:1} in case $i=r$.
  \par
  Suppose now that Proposition~\ref{p:1} is wrong and let $1\le i < r$
  be maximal such that there exists an irreducible component $C$ of
  $W(a_i)$ with codimension different from $i$ in $W$. Then $C$ must
  be contained in $W(a_{i+1})$. There exists an irreducible component
  $D$ of $W(a_{i+1})$ with $D\supseteq C$. From the maximal choice of
  $i$ we deduce that the codimension of $D$ in $W$ is $i+1$. This
  implies that the codimension of $C$ in $W$ is at least $i+1$.  On
  the other hand, we have seen that the codimension of $C$ in $W$ is
  at most $i$.  This contradiction implies Proposition~\ref{p:1}.
\end{proof}
By the way we have proved that the variety $W(a_i)\setminus
W(a_{i+1})$ is empty or equidimensional and smooth and that it can
be defined locally by reduced complete intersections.
\begin{corollary}[\!\!{\cite[Theorem~14.4~(c)]{Fulton84}}]
  \label{c:1} For $1\le i \le r$ and a generic matrix $a\in \mathbb{C}^{(s-p)\times s}$, the
  degeneracy locus $W(a_i)$ is empty or equidimensional and
  Cohen-Macaulay.
\end{corollary}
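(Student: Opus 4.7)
Plan: The strategy is to combine Proposition~\ref{p:1} (equidimensionality and expected codimension $i$) with the classical Cohen--Macaulay property of the generic determinantal variety, working locally.

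Since Cohen--Macaulayness is a local property, fix an arbitrary $x\in W(a_i)$. Because $x\in W$, we have $\rk F(x)=p$, so some $p$-minor $\Delta$ of $F$ is nonvanishing at $x$; pass to the affine chart $W_\Delta$ and, after permuting columns, assume $\Delta$ is the upper-left $p$-minor of $F$. On $W_\Delta$ the function $1/\Delta$ is regular, so the first $p$ entries of every row of $a_i$ may be cleared by row operations using the rows of $F$. This produces a matrix with the same rank as $T(a_i)$ whose last $s-p-i+1$ rows vanish in the first $p$ columns; denote its lower-right $(s-p-i+1)\times(s-p)$ block by $A=A(x,a)$, a matrix with entries in $\C[V]_\Delta$ that are affine-linear in the entries of $a$. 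Since the first $p$ columns of $F$ form an invertible block, an immediate computation gives $\rk T(a_i)=p+\rk A$; hence the condition $\rk T(a_i)<s-i+1$ on $W_\Delta$ is exactly $\rk A<s-p-i+1$, i.e.\ the simultaneous vanishing of all maximal minors of $A$. Thus $W(a_i)_\Delta$ is the scheme-theoretic preimage of the generic determinantal variety $\mathcal{D}:=\{M\in\A^{(s-p-i+1)(s-p)}\mid\rk M\le s-p-i\}$ under the morphism $\Psi\colon W_\Delta\to\A^{(s-p-i+1)(s-p)}$ sending $x$ to $A(x)$.

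By the classical Hochster--Eagon theorem (equivalently, by the Eagon--Northcott resolution), $\mathcal{D}$ is Cohen--Macaulay of codimension $i$ in its ambient affine space. Proposition~\ref{p:1} guarantees that $W(a_i)_\Delta=\Psi^{-1}(\mathcal{D})$ has the same expected codimension $i$ in the smooth chart $W_\Delta$. To transfer the Cohen--Macaulay property from $\mathcal{D}$ to $W(a_i)_\Delta$, let $a$ range over all of $\C^{(s-p)\times s}$ to form the universal total space $\{(x,a)\mid x\in W(a_i)_\Delta\}$: the row-reduction construction realizes it as smooth over $\mathcal{D}$, hence Cohen--Macaulay, and generic flatness of its projection to the $a$-parameter space then transfers Cohen--Macaulayness to the generic fiber $W(a_i)_\Delta$. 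As CM is local, $W(a_i)$ is globally Cohen--Macaulay.

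The main obstacle is this last transfer step, which ultimately rests on the nontrivial Hochster--Eagon theorem (or on producing an Eagon--Northcott complex as an explicit free resolution of the right length). A fully self-contained argument would devote significant work to this; in the framework of the paper, the shortest route---and the one reflected in the statement---is simply to invoke Fulton's Theorem~14.4(c) for degeneracy loci of the expected codimension.
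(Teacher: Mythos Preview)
Your argument is correct, and both your proof and the paper's rest on the same deep input (the Hochster--Eagon/Eagon--Northcott theorem that determinantal ideals of the expected height are perfect). The difference is in how that input is packaged. The paper's proof is more direct: after localizing at a $p$-minor $\Delta$, it simply observes that $W_\Delta$ is smooth (hence Cohen--Macaulay), that $W(a_i)_\Delta$ is cut out by the maximal minors of $T(a_i)$, and that by Proposition~\ref{p:1} this determinantal ideal has the expected height~$i$; it then cites Bruns--Vetter (equivalently Hochster--Eagon) to conclude that the quotient is Cohen--Macaulay. No row reduction and no universal family are needed: the classical statement applies directly to a matrix with entries in an arbitrary Cohen--Macaulay ring, not just to the generic matrix over a polynomial ring.

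Your route---row-reducing to the $(s-p-i+1)\times(s-p)$ block~$A$, viewing $W(a_i)_\Delta$ as $\Psi^{-1}(\mathcal D)$, passing to the universal family, and descending via generic flatness---is valid (the map $(x,a')\mapsto A(x,a')$ is a submersion because $\partial A_{j,k}/\partial a'_{j,p+k}=1$, so the universal locus is smooth over $\mathcal D$, hence CM; and a flat morphism with CM source and regular base has CM fibers by~\cite[Theorem~23.3, Corollary]{Matsumura86}). But this detour buys nothing: you still invoke Hochster--Eagon for $\mathcal D$, and the additional transfer machinery only reproves what the relative form of the same theorem gives in one line. The row reduction is likewise unnecessary, since the determinantal CM criterion applies equally well to the $(s-i+1)$-minors of the full matrix $T(a_i)$. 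Your concluding remark---that one may as well cite Fulton~14.4(c) directly---is exactly the spirit of the paper's proof, which cites the equivalent commutative-algebra source instead.
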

\begin{proof}
  The statement is local. So we may, without loss of generality,
  restrict our attention to the affine variety $W(a_i)_{\Delta}$ and
  may suppose $W(a_i)_{\Delta}\neq \emptyset$.  Observe that the
  affine variety $W_{\Delta}$ is equidimensional and smooth and
  therefore Cohen-Macaulay. Furthermore, $W(a_i)_{\Delta}$ is a
  determinantal subvariety of $W_{\Delta}$ given by maximal minors
  which is by Proposition~\ref{p:1} of pure codimension $i$ in
  $W_{\Delta}$.
  \par
  Applying now~\cite[Theorem~2.7 and Proposition~16.19]{BrVe88} to
  this situation we conclude that $W(a_i)_{\Delta}$ is Cohen-Macaulay
  (see also~\cite{EaNo62, EaHo74} and~\cite[Section~18.5]{Eisenbud95}
  for the general context of determinantal varieties). This implies
  Corollary~\ref{c:1}.
\end{proof}
Taking into account Corollary \ref{c:1} we conclude that the
$(s-i+1)$-minors of $T(a_i)$ induce in the local ring of $W$ at any
point of $W(a_i)$ a radical ideal. Therefore $W(a_i)$ is as scheme
reduced.
\subsection{Normality and rational equivalence of degeneracy loci}
\begin{theorem}\label{t:2}
  For $1\le i \le r$ and a generic matrix $a\in \mathbb{C}^{(s-p)\times s}$, the degeneracy locus $W(a_i)$ is empty or
  equidimensional, Cohen-Macaulay and normal.
\end{theorem}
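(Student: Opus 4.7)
I propose to apply Serre's criterion: a Cohen--Macaulay scheme is normal if and only if it is regular in codimension one ($R_1$). By Corollary~\ref{c:1} the variety $W(a_i)$ is Cohen--Macaulay, so it automatically satisfies Serre's $S_2$, and only $R_1$ remains to be verified: the singular locus $\mathrm{Sing}(W(a_i))$ must have codimension at least two in $W(a_i)$. The remark preceding Corollary~\ref{c:1} already gives $\mathrm{Sing}(W(a_i))\subseteq W(a_{i+1})$, but Proposition~\ref{p:1} only places $W(a_{i+1})$ in codimension one inside $W(a_i)$, which is insufficient.

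The strategy is to prove the stronger inclusion $\mathrm{Sing}(W(a_i))\subseteq U_i$, where
\[
U_i := \{\, x\in W \mid \rk T(a_i)(x)\le s-i-1 \,\},
\]
together with the codimension estimate $\mathrm{codim}_W U_i \ge 2(i+1)$; these two facts at once yield $\mathrm{codim}_{W(a_i)}\mathrm{Sing}(W(a_i))\ge i+2\ge 2$. The inclusion is handled by the classical determinantal fact that the singular locus of the variety of $m\times n$ matrices of rank at most $k$ coincides with the subvariety of matrices of rank at most $k-1$ (see, e.g.,~\cite[Section~18.5]{Eisenbud95} or~\cite{BrVe88}). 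By the Thom--Sard argument of Lemma~\ref{l:1} and Proposition~\ref{p:1}, the map $x\mapsto T(a_i)(x)$ is transverse, on the affine chart $W_\Delta$, to the generic rank stratification of $(s-i+1)\times s$ matrices, so this determinantal description of the singular locus pulls back directly to $W(a_i)$.

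The codimension bound on $U_i$ is the heart of the matter and the step I expect to require the most care. The plan is to mimic the Thom--Sard scheme of Lemma~\ref{l:1}, now adapted to a rank drop of two. On an affine chart $V_{m_{i+1}}$ where the upper-left $(s-i-1)$-minor $m_{i+1}$ of $T(a_i)$ does not vanish, the Exchange Lemma of~\cite{BaGiHeMb01} rewrites the condition $\rk T(a_i)\le s-i-1$ as the vanishing of the $2\times 2$ minors of a $2\times(i+1)$ residual matrix whose entries are affine-linear in the last two rows of $a$. Introducing $2(i+1)$ new indeterminates in place of suitable entries of these two rows and forming the natural analogue of the morphism $\Phi_i$, the Jacobian becomes block-triangular with copies of $m_{i+1}$ along the diagonal, so that the origin of $\A^{2(i+1)}$ is a regular value. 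The Weak Transversality Theorem of Thom--Sard then forces $\mathrm{codim}_W U_i\ge 2(i+1)$ for a generic matrix $a$, completing the argument.
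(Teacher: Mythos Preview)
Your plan is essentially the paper's own argument: both reduce to Serre's $R_1$, both localize on a chart where an $(s-i-1)$-minor of $T(a_{i+2})$ (your $m_{i+1}$ is the paper's $\upsilon$) is invertible, and both show via a Thom--Sard argument that on this chart the singular locus of $W(a_i)$ lies in a set of codimension $2(i+1)$ in $W$---your $U_i$ coincides there with the paper's $\mathcal{Z}=\{m_{k,l}=0\}$.

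Two points need tightening. First, on the chart $V_{m_{i+1}}$ the condition $\rk T(a_i)\le s-i-1$ is the vanishing of the \emph{entries} of the $2\times(i+1)$ residual matrix, not of its $2\times 2$ minors: after row reduction one has $\rk T(a_i)=(s-i-1)+\rk B$, so $\rk B\le 1$ (your $2\times 2$ minors) cuts out $W(a_i)$, whereas $B=0$ cuts out $U_i$. Your subsequent $2(i+1)$-variable Thom--Sard setup is in fact tailored to the entries of $B$, so the slip is only in the wording. Second, the charts $V_{m_{i+1}^*}$ (over all choices of the $(s-i-1)$-minor) cover only $W\setminus W(a_{i+2})$, so your argument does not yet bound $\mathrm{codim}_W U_i$ globally; the paper closes this by observing that $W(a_{i+2})$ already has codimension two in $W(a_i)$ by Proposition~\ref{p:1}, which absorbs whatever singular points lie there. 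You should insert the same step.
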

\begin{proof}
  Again, the statement of Theorem~\ref{t:2} being local, we may
  restrict our attention to the affine variety $W(a_i)_{\Delta}$ which
  we suppose to be nonempty. By Corollary~\ref{c:1}, the variety
  $W(a_i)_{\Delta}$ is equidimensional and Cohen-Macaulay and by
  Serre's normality criterion (see, e.g.,
  \cite[Theorem~23.8]{Matsumura86}) it suffices therefore to prove the
  following statement.
  \begin{claim}
    The singular points of $W(a_i)_{\Delta}$ form a subvariety of
    codimension at least two.
  \end{claim}
  \begin{proof}[Proof of the claim]
    We follow the general lines of the argumentation
    in~\cite[Section~3]{BaGiHeSaSc10}. In case $i=r$,
    Proposition~\ref{p:1} implies the claim. Let us therefore suppose
    that there exists an index $1\le i <r$ such that the claim is
    wrong. Let
    \[
    \upsilon:=\det \begin{bmatrix}
      f_{1,1}&\cdots & f_{1,s-i-1}\\
      \vdots &     &\vdots\\
      f_{p,1}&\cdots & f_{p,s-i-1}\\
      a_{1,1}&\cdots & a_{1,s-i-1}\\
      \vdots &     &\vdots\\
      a_{s-p-i-1,1}&\cdots & a_{s-p-i-1,s-i-1}\\
    \end{bmatrix}
    \]
    be the $(s-i-1)$-minor of the $((s-i-1)\times s)$-matrix
    $T(a_{i+2})=\begin{bmatrix}F\\a_{i+2}\end{bmatrix}$ which is given
    by the columns numbered $1\klk s-i-1$.
    \par
    For $s-p-i\le k \le s-p-i+1$ and $s-i\le l \le s$ let
    \[
    m_{k,l}:=\det \begin{bmatrix}
      f_{1,1}&\cdots & f_{1,s-i-1}&f_{1,l}\\
      \vdots &     &\vdots&\vdots\\
      f_{p,1}&\cdots & f_{p,s-i-1}&f_{p,l}\\
      a_{1,1}&\cdots & a_{1,s-i-1}&a_{1,l}\\
      \vdots &     &\vdots&\vdots\\
      a_{s-p-i-1,1}&\cdots & a_{s-p-i-1,s-i-1}&a_{s-p-i-1,l}\\
      a_{k,1}&\cdots & a_{k,s-i-1}&a_{k,l}\\
    \end{bmatrix}.
    \]
    We consider now an arbitrary point $x$ of $W(a_i)_{\Delta\cdot
      \upsilon}$. If there exists a pair $(k,l)$ of indices with
    $s-p-i\le k \le s-p-i+1$ and $s-i\le l \le s$ and $m_{k,l}(x)\neq 0$,
    then, by the generic choice of the complex $(r\times s)$-matrix
    $a$, the variety $W(a_i)_{\Delta\cdot \upsilon}$ must be smooth
    at~$x$ (compare to Lemma~\ref{l:1} and the comments following it).
    \par
    Therefore the singular locus of $W(a_i)_{\Delta\cdot \upsilon}$ is
    contained in
    \[
    \mathcal{Z}:=W_{\Delta\cdot \upsilon}\cap \{ m_{k,l}=0 \mid s-p-i\le k
    \le s-p-i+1,\; s-i\le l \le s\}.
    \]
    Again, the generic choice of $a$ implies that $\mathcal{Z}$ is
    empty or has pure codimension $2(i+1)$ in $W_{\Delta\cdot
      \upsilon}$.  Hence, the singular locus of $W(a_i)_{\Delta\cdot
      \upsilon}$ has at least codimension $2(i+1)$ in $W_{\Delta\cdot
      \upsilon}$ and therefore at least codimension two in
    $W(a_i)_{\Delta\cdot \upsilon}$. This argumentation proves that
    the singular points of $W(a_i)_{\Delta}\setminus
    W(a_{i+2})_{\Delta}$ are contained in a subvariety of
    $W(a_i)_{\Delta}$ of codimension at least two. Since
    $W(a_{i+2})_{\Delta}$ is empty or has by Proposition~\ref{p:1}
    codimension two in $W(a_i)_{\Delta}$ the claim follows.
  \end{proof}
  \noindent
  This ends our proof of Theorem~\ref{t:2}. For more details we refer
  to \cite{BaGiHeSaSc10}.
\end{proof}
\begin{corollary}\label{c:2}
  For $1\le i \le r$, the irreducible components of $W(a_i)$ are
  exactly the Zariski connected components of $W(a_i)$ and hence
  mutually disjoint.
\end{corollary}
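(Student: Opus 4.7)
The plan is to derive the statement directly from the normality established in Theorem~\ref{t:2}, via the standard principle that a normal Noetherian scheme has disjoint irreducible components.

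First I would reduce to showing that any two distinct irreducible components $C_1, C_2$ of $W(a_i)$ are disjoint. If they met at a point $x$, then the local ring $\mathcal{O}_{W(a_i),x}$ would have at least two minimal primes, namely those corresponding to $C_1$ and $C_2$. But by Theorem~\ref{t:2} (applied after shrinking to any affine chart such as $W(a_i)_\Delta$, which suffices since the statement is local), $W(a_i)$ is normal; combined with the fact that $W(a_i)$ is reduced as a scheme (noted right after Corollary~\ref{c:1}), the local ring $\mathcal{O}_{W(a_i),x}$ is an integral domain, so it has a unique minimal prime. This contradiction would show $C_1 \cap C_2 = \emptyset$.

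Next, once disjointness of irreducible components is known, each irreducible component $C$ is closed in $W(a_i)$ by definition and its complement $W(a_i)\setminus C$ equals the union of the remaining irreducible components, which is also closed. Hence $C$ is clopen in $W(a_i)$. Since an irreducible topological space is connected, $C$ is a nonempty clopen connected subset, therefore a connected component of $W(a_i)$. Conversely, every connected component is a union of irreducible components (as $W(a_i)$ has finitely many of them), and by the clopen-and-connected argument it must coincide with a single such component.

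The only mildly delicate step is the passage from normality to the uniqueness of minimal primes in the local ring, but this is a routine consequence of the definition of normality (each localization at a prime is a normal, hence integral, domain) together with reducedness; no further obstacle arises, so the corollary follows cleanly from Theorem~\ref{t:2}.
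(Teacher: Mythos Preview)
Your proof is correct and follows essentially the same approach as the paper: the paper simply cites Theorem~\ref{t:2} together with \cite[Chapter~1, \S~9, Remark]{Matsumura86}, and what you have written is precisely the content of that remark spelled out in detail (normal local rings are domains, hence irreducible components cannot meet, hence they coincide with the connected components). One minor remark: the appeal to reducedness is not really needed as a separate input, since under the standard convention a normal Noetherian scheme already has integral local rings by definition; but this does no harm to your argument.
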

\begin{proof}
  Corollary~\ref{c:2} follows immediately from Theorem~\ref{t:2}
  taking into account~\cite[Chapter~1, \S~9, Remark]{Matsumura86}.
\end{proof}
Let $a\in \C^{(s-p)\times s}$ be generic and $1\le i \le r$. Following
the Thom-Porteous formula we may express the rational equivalence
class of $W(a_i)$ in terms of the Chern classes of $E$ (see
\cite[Theorem~14.4]{Fulton84}, and, in case that $W(a_i)$ is a polar
variety, the proof of~\cite[Proposition~1.2]{Piene78}). This
argumentation yields the following statement.
\begin{theorem}\label{t:3}
  Let $a,b\in \C^{(s-p)\times s}$ be generic matrices and let $1\le i
  \le r$. Then the subvarieties $W(a_i)$ and $W(b_i)$ of $W$ are
  rationally equivalent.
\end{theorem}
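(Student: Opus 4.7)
The plan is to realize $W(a_i)$ and $W(b_i)$ as Thom-Porteous degeneracy loci of bundle maps sharing the same source and target, and then to apply the Thom-Porteous formula, which expresses the rational equivalence class of such a locus in terms of the Chern classes of the bundles involved. Concretely, as already described in Section~\ref{s:1}, the matrix $a_i$ defines a bundle morphism $\varphi^a_i\colon E\to W\times\A^{s-p-i+1}$, and $W(a_i)$ is exactly the locus of $x\in W$ where the fiber map $\varphi^a_i|_{E_x}$ fails to be surjective, that is, the degeneracy locus $D_{s-p-i}(\varphi^a_i)$ in the notation of~\cite[Chapter~14]{Fulton84}. The same construction with $b$ in place of $a$ yields $W(b_i)=D_{s-p-i}(\varphi^b_i)$.

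Before invoking~\cite[Theorem~14.4]{Fulton84}, I would verify its hypotheses. The expected codimension of $D_{s-p-i}$ for a morphism from a rank-$(s-p)$ bundle to a rank-$(s-p-i+1)$ bundle is $(s-p-(s-p-i))\cdot(s-p-i+1-(s-p-i))=i$, which matches the actual codimension of both $W(a_i)$ and $W(b_i)$ in $W$ by Proposition~\ref{p:1}. Moreover, by Corollary~\ref{c:1} each locus is Cohen-Macaulay and, as observed right after the proof of Proposition~\ref{p:1}, its defining minors cut it out as a reduced complete intersection on every chart $W_\Delta$; hence the fundamental cycle of the reduced subvariety $W(a_i)$ coincides with the class produced by the Thom-Porteous formula, and similarly for $W(b_i)$.

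With these hypotheses in place, \cite[Theorem~14.4]{Fulton84} expresses $[W(a_i)]\in A_*(W)$ as a determinantal polynomial in the Chern classes of the virtual bundle $(W\times\A^{s-p-i+1})-E$. Since the target $W\times\A^{s-p-i+1}$ is the trivial rank-$(s-p-i+1)$ bundle, its total Chern class equals $1$, so the resulting expression depends only on $s$, $p$, $i$ and on the Chern classes of $E$, and not on the particular generic matrix $a$. Applying the identical formula with $b$ in place of $a$ gives $[W(b_i)]=[W(a_i)]$ in $A_*(W)$, which is the rational equivalence asserted in Theorem~\ref{t:3}. The only real obstacle, beyond matching the conventions of~\cite[Chapter~14]{Fulton84} with those of the paper, is to confirm that the reduced scheme structure on $W(a_i)$ is the one to which the Thom-Porteous formula refers; but this is exactly what Proposition~\ref{p:1} and Corollary~\ref{c:1} have already secured.
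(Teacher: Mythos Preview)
Your proposal is correct and follows essentially the same approach as the paper: the paper's argument consists of the single sentence preceding Theorem~\ref{t:3}, which invokes the Thom--Porteous formula \cite[Theorem~14.4]{Fulton84} to express $[W(a_i)]$ in terms of the Chern classes of $E$, exactly as you do. You have simply spelled out the verification of the hypotheses (expected codimension via Proposition~\ref{p:1}, Cohen--Macaulayness of the ambient smooth variety $W$, and reducedness of the scheme structure as noted after Corollary~\ref{c:1}) more carefully than the paper does; one minor inaccuracy is that the local reduced complete intersection description after Proposition~\ref{p:1} is stated only on $W(a_i)\setminus W(a_{i+1})$, but this does not affect your argument since what Fulton's Theorem~14.4(c) actually requires is the expected codimension together with Cohen--Macaulayness, both of which you correctly cite.
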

In the case of generic polar varieties Theorem~\ref{t:3} corresponds
to~\cite[Proposition~1.2]{Piene78}. It is not too hard to prove by
elementary techniques the \emph{algebraic} equivalence of $W(a_i)$ and
$W(b_i)$. However, the proof of their \emph{rational} equivalence
seems to be out of the reach of direct arguments.
\subsection{Geometric tools}
The following two technical statements
will be used in Section~\ref{s:2}, where we describe our main
algorithm.
\par
For $1\le k_1 < \cdots < k_p\le s$ we denote by $\Delta_{k_1\klk k_p}$
the $p$-minor of $F$ given by the columns numbered $k_1\klk k_p$, and
for the columns numbered $1\le l_1 <\cdots < l_{s-i}\le s$ that
contain $k_1\klk k_p$, we denote by $m_{l_1\klk l_{s-i}}$ the
$(s-i)$-minor of $T(a_{i+1})$ given by the columns numbered $l_1\klk
l_{s-i}$ (in case $s=r+p$ and $i=r$ we have $m_{l_1\klk
  l_p}=\Delta_{k_1\klk k_p}$). The following lemma is borrowed
from~\cite[Section~4.3]{BaGiHeLePa12}.
\begin{lemma}\label{l:2}
  Let $1\le i \le r$ and let $C$ be an irreducible component of
  $W(a_i)_{\Delta_{k_1\klk k_p}}$.  Then the polynomial $m_{l_1\klk
    l_{s-i}}$ does not vanish identically on $C$.
\end{lemma}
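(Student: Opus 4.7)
The plan is to proceed by contradiction. Writing $K:=\{k_1\klk k_p\}$, $J:=\{l_1\klk l_{s-i}\}$, $\Delta_K:=\Delta_{k_1\klk k_p}$, and $m_J:=m_{l_1\klk l_{s-i}}$, suppose $m_J\equiv 0$ on $C$. Since Proposition~\ref{p:1} gives $\dim C=r-i$, it suffices to prove that $W(a_i)_{\Delta_K}\cap\{m_J=0\}$ has dimension at most $r-i-1$.

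I split this locus according to membership in $W(a_{i+1})$. The portion contained in $W(a_{i+1})_{\Delta_K}$ has dimension at most $r-i-1$ by Proposition~\ref{p:1} applied with index $i+1$. On the complement one has $\rk T(a_{i+1})(x)=s-i$ at every point $x$, and the columns of $T(a_{i+1})(x)$ indexed by $K$ are linearly independent since their restriction to the $F$-rows has determinant $\Delta_K(x)\neq 0$; thus $K$ extends to a size-$(s-i)$ subset $J^*\supseteq K$ of linearly independent columns, giving $m_{J^*}(x)\neq 0$ with $J^*\neq J$ (because $m_J(x)=0$). This complement is therefore covered by the affine charts $V_{\Delta_K\cdot m_{J^*}}$ as $J^*$ ranges over size-$(s-i)$ subsets with $K\subseteq J^*\neq J$.

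On each such chart, the analogue of Lemma~\ref{l:1} for the configuration $(K,J^*)$ shows that $W(a_i)\cap V_{\Delta_K\cdot m_{J^*}}$ is smooth of pure codimension $i$, cut out transversally by the $(s-i+1)$-minors $M^*_j$ of $T(a_i)$ on columns $J^*\cup\{j\}$ with $j\notin J^*$. The main obstacle is to show that for generic $a$ the equation $m_J=0$ is transverse to this smooth intersection, yielding codimension $i+1$ in total. Following the pattern of Lemma~\ref{l:1}, I fix $j^*\in J\setminus J^*$ (non-empty since $J\neq J^*$) and form the map $V_{\Delta_K\cdot m_{J^*}}\times\A^{i+1}\to\A^{i+1}$ in which the $i$ entries $a_{s-p-i+1,j}$ for $j\notin J^*$ together with the single entry $a_{s-p-i,j^*}$ are replaced by independent indeterminates. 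Since the last row of $a_i$ does not appear in $m_J$ and $a_{s-p-i,j^*}$ occurs in $M^*_j$ only when $j=j^*$, the Jacobian along the new parameters is block triangular with diagonal $(\pm m_{J^*},\ldots,\pm m_{J^*},\pm\mu)$, where $\mu$ is the $(s-i-1)$-minor of $T(a_{i+2})$ on columns $J\setminus\{j^*\}$. For generic $a$ this factor is non-zero on a dense open subset, so $(0,\ldots,0)$ is a regular value and Thom--Sard delivers the desired transversality there; the residual locus $\{\mu=0\}$ can be handled either by varying the choice of $j^*\in J\setminus J^*$ or by exploiting the large codimension of the rank drop of $T(a_{i+2})|_J$ for generic $a$, ensuring that $W(a_i)\cap\{m_J=0\}\cap\{\mu=0\}$ already has dimension $\le r-i-1$. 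Summing the two dimension bounds yields $\dim(W(a_i)_{\Delta_K}\cap\{m_J=0\})\le r-i-1<r-i=\dim C$, the desired contradiction.
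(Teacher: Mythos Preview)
Your overall strategy coincides with the paper's: argue by contradiction, use Proposition~\ref{p:1} to bound the $W(a_{i+1})$ part, cover the complement by charts $m_{J^*}\neq 0$ with $J^*\neq J$, and compare codimensions. The divergence is in the \emph{order} in which you impose the conditions $\{M^*_j=0\}$ and $\{m_J=0\}$, and this is where a genuine gap appears.

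You first cut by the $M^*_j$'s to obtain the smooth locus $W(a_i)_{\Delta_K\cdot m_{J^*}}$ and then try to show that $m_J=0$ is transverse to it. To get the extra codimension you introduce the parameter $a_{s-p-i,j^*}$, obtaining a block-triangular Jacobian with diagonal $(\pm m_{J^*},\ldots,\pm m_{J^*},\pm\mu)$. The trouble is that $\mu$ is a fixed coordinate function on $V$ (it depends only on rows $1,\ldots,s-p-i-1$ of $a$), so your Thom--Sard map fails to have $0$ as a regular value at any $(x,z,w)$ with $\mu(x)=0$. Your two proposed escapes do not close the gap: varying $j^*$ over $J\setminus J^*$ still leaves points where all the corresponding cofactors $\mu_{j^*}$ vanish simultaneously (this is \emph{not} equivalent to $\rk T(a_{i+2})|_J<s-i-1$, since the kernel vector may be supported on $J\cap J^*$), and invoking ``large codimension of the rank drop of $T(a_{i+2})|_J$'' would itself require showing that $\{\mu=0\}\cap W(a_i)_{\Delta_K\cdot m_{J^*}}$ has dimension at most $r-i-1$ --- which is precisely a statement of the same type you are trying to prove, now with $\mu$ in place of $m_J$.

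The paper sidesteps this entirely by reversing the order. It first establishes, by an easy induction on the number of $a$-rows, that $m_J$ does not vanish identically on any irreducible component of $W_{\Delta_K}$; hence $\mathcal{Y}:=W_{\Delta_K}\cap\{m_J=0\}$ is empty or of pure codimension one. Then it applies the Lemma~\ref{l:1} argument to $\mathcal{Y}_{m_{J^*}}$, perturbing only the entries $a_{s-p-i+1,j}$ for $j\notin J^*$. Since row $s-p-i+1$ appears in the $M^*_j$'s but in neither $m_J$ nor $m_{J^*}$, the Jacobian with respect to these parameters is $\pm m_{J^*}\cdot I_i$, which is invertible on the whole chart --- no residual locus, no $\mu$. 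This yields directly that $\mathcal{Y}_{m_{J^*}}\cap\{M^*_j=0\}$ has codimension $i+1$ in $W_{\Delta_K\cdot m_{J^*}}$, and since $C_{m_{J^*}}$ is contained in this set the contradiction with Proposition~\ref{p:1} follows. The moral: impose the constraint that depends on fewer rows of $a$ \emph{first}, so that the remaining rows are completely free for the transversality argument.
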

\begin{proof}
  Fix $1\le i \le r$. Without loss of generality we may assume $k_1:=1
  \klk k_p:=p$ and $l_1:=1\klk l_{s-i}:=s-i$ and hence,
  $\Delta_{k_1\klk k_p}:=\Delta$ and $m_{l_1\klk l_{s-i}}:=m_i$. By
  induction on $1\le i \le r$ one deduces from the genericity of the
  complex matrix $a$ that $m_i$ does not vanish identically on any
  irreducible component of $W_{\Delta}$. Therefore the affine variety
  $\mathcal{Y}:=W_{\Delta}\cap \{m_i=0\}$ is empty or of pure
  codimension one in $W_{\Delta}$.
  \par
  Let $1\le l_1^*<\cdots <l_{s-i}^*\le s$ be arbitrary and denote the
  $(s-i)$-minor $m_{l_1^*\klk l_{s-i}^*}$ of $T(a_{i+1})$ by $m_i^*$.
  Further, let $M_{s-i+1}^*\klk M_s^*$ be the $(s-i+1)$-minors of
  $T(a_i)$ given by the columns numbered $l_1^*\klk l_{s-i}^*$ to
  which we add, one by one, the columns numbered by the elements of
  the index set $\{1\klk s\}\setminus \{l_1^*\klk l_{s-i}^*\}$. Again,
  the genericity of $a$ implies that the intersection
  $\mathcal{Y}_{m_i^*}\cap \{M_{s-i+1}^*=0\klk M_s^*=0\}$ is empty or
  of pure codimension $i$ in $\mathcal{Y}_{m_i^*}$ and hence of pure
  codimension $i+1$ in $W_{\Delta\cdot m_i^*}$.
  \par
  Let $C$ be an irreducible component of $W(a_i)_{\Delta}$. From
  Proposition~\ref{p:1} we deduce that $C$ is not contained in
  $W(a_{i+1})_{\Delta}$. This implies that there exists an
  $(s-i)$-minor $m_i^*$ of $T(a_{i+1})$ with $C_{m_i^*}\neq \emptyset$. The
  corresponding $(s-i+1)$-minors $M_{s-i+1}^*\klk M_s^*$ of $T(a_i)$
  define in $W_{\Delta\cdot m_i^*}$ a variety which contains
  $C_{m_i^*}$ as irreducible component. Hence, $C_{m_i^*}$ is a subset
  of $\{M_{s-i+1}^*=0\klk M_s^*=0\}$. Suppose now that $m_i$ vanishes
  identically on $C$. Then $\mathcal{Y}_{m_i^*}$ contains $C_{m_i^*}$
  and is in particular nonempty. Since $C_{m_i^*}$ is contained in
  $\mathcal{Y}_{m_i^*}\cap \{M_{s-i+1}^*=0\klk M_s^*=0\}$ we conclude
  that the codimension of $C_{m_i^*}$ in $W_{\Delta\cdot m_i^*}$ is at
  least $i+1$.
  \par
  On the other hand, Proposition~\ref{p:1} implies that the
  codimension of $C_{m_i^*}$ in $W_{\Delta\cdot m_i^*}$ is~$i$. This
  contradiction proves that $m_i$ cannot vanish identically on $C$.
\end{proof}
Suppose that the quasi-affine variety $V$ is embedded in the affine
space $\A ^n$ and that the Zariski closure of $V$ in $\A ^n$ can be
defined by the polynomials of $\C[\xon]$ of degree at most
$d$. Furthermore suppose that for each $1\le i\le p$ and $1\le j\le s$
there is given a polynomial $F_{i,j}\in \C[\xon]$ of degree at most $d$
such that the entry $f_{i,j}$ of the matrix $F$ is the restriction of
$F_{i,j}$ to $V$.
\par
Let $b_1,\dots, b_{r+1}\in\C^{s\times s}$ be regular matrices. We call
$(b_1,\dots, b_{r+1})$ a \emph{hitting sequence} for $V$ and $F$ if
the following property holds: there exist $p$-minors
$\bfs{\Delta}_1\klk \bfs{\Delta}_{r+1}$ of the matrices $F\cdot
b_1,\dots,F\cdot b_{r+1}\in\C[V]^{p\times s}$ respectively, such that
for any point $x$ of $W$ at least one of the minors $\bfs{\Delta}_t$
($1\le t\le r+1$) does not vanish at $x$. The following lemma is
reminiscent of~\cite[Theorem~4.4]{HeSc82}.
\begin{lemma} \label{l:3}
  Let $\kappa:=(4pd)^{2n}$ and let
  $\mathcal{K}:=\{1,\ldots,\kappa\}$.  Then the set
  $(\mathcal{K}^{s\times s})^{r+1}$ contains at least
  $\kappa^{s^2(r+1)}(1-4^{-n})$ hitting sequences for $V$ and $F$.
\end{lemma}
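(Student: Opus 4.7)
My plan is to exhibit a canonical family of $p$-minors and then to count, via a Schwartz-Zippel style argument applied to a ``bad'' projection variety, those tuples of matrices for which this family fails the hitting sequence property. Concretely, I would fix $\bfs{\Delta}_t(X,b_t):=\det\bigl((F(X)\cdot b_t)_{*,\{1,\dots,p\}}\bigr)$ for $t=1,\dots,r+1$. By the Cauchy--Binet expansion, $\bfs{\Delta}_t$ is a linear combination of the $p$-minors of $F(X)$ with coefficients the $p\times p$ minors of $b_t$, so its $X$-degree is at most $pd$ and its $b_t$-degree is $p$; moreover, since $F(x)$ has rank $p$ on $W$, the polynomial $\bfs{\Delta}_t(x,\cdot)$ is a nonzero polynomial in $b_t$ for every $x\in W$ (some nonvanishing $p$-minor of $F(x)$ shows up as coefficient of a nontrivial minor of $b_t$).

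A tuple $(b_1,\dots,b_{r+1})$ fails the hitting sequence condition for this specific minor choice precisely when $W\cap\bigcap_t\{\bfs{\Delta}_t=0\}\neq\emptyset$, i.e., when it lies in the image $B\subset\A^{s^2(r+1)}$ under the projection to the $b$-coordinates of the incidence variety
\[
Y:=\bigl\{(x,b_1,\dots,b_{r+1})\in \overline V\times \A^{s^2(r+1)}\bigm| \bfs{\Delta}_t(x,b_t)=0,\ t=1,\dots,r+1\bigr\}.
\]
Because the $r+1$ equations $\bfs{\Delta}_t=0$ live in disjoint coordinate blocks $\A^{s^2}$ and each cuts a proper hypersurface in its block over every $x\in W$, the part of $Y$ lying over $W$ has dimension $\dim W+(s^2-1)(r+1)=s^2(r+1)-1$, so $\overline B$ is a proper closed subvariety of $\A^{s^2(r+1)}$. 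B\'ezout's inequality in $\A^{n+s^2(r+1)}$, together with the Heintz bound $\deg \overline V\le d^{n-r}$ and the total degree $p(d+1)\le 2pd$ of each $\bfs{\Delta}_t$, yields
\[
\deg\overline B\le\deg Y\le d^{n-r}\cdot(p(d+1))^{r+1}\le (2pd)^{n+1},
\]
so $\overline B$ lies in the zero set of a nonzero polynomial of degree at most $(2pd)^{n+1}$.

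The Schwartz-Zippel lemma then gives $|B\cap\mathcal{K}^{s^2(r+1)}|\le (2pd)^{n+1}\kappa^{s^2(r+1)-1}$, and a parallel Schwartz-Zippel estimate on $\prod_{t=1}^{r+1}\det b_t$ (of degree $s(r+1)$) bounds the tuples containing a non-regular matrix by $s(r+1)\kappa^{s^2(r+1)-1}$. Summing these two contributions, dividing by $\kappa^{s^2(r+1)}$ and substituting $\kappa=(4pd)^{2n}$, an elementary arithmetic check gives a bad fraction at most $4^{-n}$, which is the desired bound. The main technical obstacle is the combined dimension-and-degree estimate on $Y$: one must verify both that the fibres over $W$ truly have codimension $r+1$ in $\A^{s^2(r+1)}$ (using that the $\bfs{\Delta}_t$'s act on disjoint coordinate blocks and are nonzero on each, by the rank condition on $F(x)$) and that B\'ezout in $\A^{n+s^2(r+1)}$ produces the sharp bound $(2pd)^{n+1}$; the constant $4$ appearing in $\kappa=(4pd)^{2n}$ is calibrated exactly so that this arithmetic beats the target $4^{-n}$.
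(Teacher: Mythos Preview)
Your approach is essentially the paper's: both fix the leading $p$-minor of $F\cdot b_t$, verify via Cauchy--Binet that it is a nonzero polynomial in $b_t$ for each $x\in W$, set up the incidence variety cut out by the $r+1$ minors, show it has dimension $s^2(r+1)-1$, bound its degree by the B\'ezout inequality, and then count bad tuples in $\mathcal{K}^{s^2(r+1)}$. The only methodological difference is in the last step: the paper counts directly via \cite[Proposition~2.3]{HeSc82} applied to $\overline{\mathcal{H}}\cap\bigl(\A^n\times(\mathcal{K}^{s\times s})^{r+1}\bigr)$, whereas you pass through a containing hypersurface of degree $\le\deg Y$ and invoke Schwartz--Zippel. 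Both routes are valid, and your degree estimate $(2pd)^{n+1}$ is in fact sharper than the paper's $(2pd)^{2n}$.

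Two points need repair. First, you define $Y$ over $\overline V$ rather than over $W$; if $\overline V\setminus W\neq\emptyset$ then at any such point all $p$-minors of $F$ vanish, so $\bfs{\Delta}_t(x,\cdot)\equiv 0$ and the entire $\A^{s^2(r+1)}$ sits in the fiber, forcing $\pi(Y)=\A^{s^2(r+1)}$. You must take the incidence variety over $W$ (as the paper does); its closure is then a union of irreducible components of your $Y$, so your B\'ezout bound on $\deg Y$ still dominates its degree and your argument goes through. Second, your extra Schwartz--Zippel term $s(r+1)/\kappa$ for singular $b_t$ is not controlled by $\kappa=(4pd)^{2n}$, since the lemma places no bound on $s$ in terms of $n,p,d$; for $s$ large enough this term alone exceeds $4^{-n}$. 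The paper's proof simply does not include regularity of the $b_t$ in its count, and you should drop that term as well (regularity is only contextual in the definition and is not used in the algorithm).
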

\begin{proof}
  For $1\le t\le r+1$ and $1\le k,l\le s$ let $B_{k,l}^t$ be new
  indeterminates over $\C$ and let $\bfs{B}_t:=(B^t_{k,l})_{1\le
    k,l\le s}$. Furthermore, let $\bfs{\Delta}_t\in\C[V][\bfs{B}_t]$
  be the $p$-minor of $F\cdot \bfs{B}_t$ given by the first $p$
  columns of $F\cdot \bfs{B}_t$.
  \par
  Consider an arbitrary point $x$ of $W$. Without loss of generality
  we may suppose $\Delta(x)\not=0$. Fix for the moment $1\le t\le r+1$
  and consider the matrix $\bfs{C}_t$ obtained from $\bfs{B}_t$ by
  substituting zero for $B_{k,l}^t$ for any $(k,l)$ with $p+1\le k\le
  s$ and $1\le l\le p$, namely
  \[
  \bfs{C}_t:=\left[
    \begin{array}{cccccc}
      B_{1,1}^t & \cdots & B_{1,p}^t & B_{1,p+1}^t & \dots & B_{1,s}^t \\
      \vdots &  & \vdots & \vdots &  & \vdots \\
      B_{p,1}^t & \cdots & B_{p,p}^t & B_{p,p+1}^t & \dots & B_{p,s}^t \\
      0 & \cdots & 0 & B_{p+1,p+1}^t & \dots & B_{p+1,s}^t  \\
      \vdots &  & \vdots & \vdots &  & \vdots \\
      0 & \cdots & 0 & B_{s,p+1}^t & \dots & B_{s,s}^t   \\
    \end{array}
  \right].
  \]
  It is easy to see that the left $p$-minor
  $\bfs{\Delta}_t(x,\bfs{C}_t)$ of the matrix $F\cdot \bfs{C}_t$ is of
  the form $\Delta(x)$ times a nonzero polynomial of $\C[\bfs{B}_t]$.
  In particular, $\bfs{\Delta}_t(x,\bfs{C}_t)$ is a polynomial of
  positive degree. We conclude now \emph{a fortiori} that for any
  $x\in W$ the polynomial $\bfs{\Delta}_t(x,\bfs{B}_t)$ is of positive
  degree.
  \par
  We consider now the incidence variety $\mathcal{H}\subset
  W\times(\A^{s\times s})^{r+1}$ defined by the vanishing of
  $\bfs{\Delta}_1,\dots,\bfs{\Delta}_{r+1}$. Let $\pi$ be the
  projection of $\mathcal{H}$ into $(\A^{s\times s})^{r+1}$. It is not
  difficult to see that $\mathcal{H}$ is equidimensional of dimension
  $s^2(r+1)-1$. In order to show this, we proceed recursively. Let
  $W_0$ be an arbitrary irreducible component of $W$. Since the
  polynomial $\bfs{\Delta}_1(x,\bfs{B}_1)$ has positive degree in the
  variables $\bfs{B}_1$ for any point $x\in W$, the variety
  $\big(W_0\times (\A^{s\times s})^{r+1}\big)\cap\{\bfs{\Delta}_1=0\}$
  must be equidimensional of dimension $r+s^2(r+1)-1$. Moreover, each
  irreducible component of this variety has the form $W_1\times
  (\A^{s\times s})^r$, where $W_1$ is an irreducible component of
  $\big(W_0\times\A^{s\times
    s}\big)\cap\{\bfs{\Delta}_1=0\}$. Applying this argument
  recursively for each polynomial $\bfs{\Delta}_t$ we conclude that
  $\bfs{\Delta}_1,\dots,\bfs{\Delta}_{r+1}$ constitute a secant family
  for the variety $W\times (\A^{s\times s})^{r+1}$ (recall that
  $\bfs{\Delta}_1,\dots, \bfs{\Delta}_{r+1}$ are polynomials in
  disjoint groups of indeterminates). Hence the incidence variety
  $\mathcal{H}$ is equidimensional of dimension
  $r+s^2(r+1)-(r+1)=s^2(r+1)-1$.
  \par
  In particular, we infer that the Zariski closure of
  $\pi(\mathcal{H})$ in $(\A ^{s\times s})^{r+1}$ has dimension at
  most $s^2(r+1)-1$ and therefore it is a proper closed subvariety of
  $(\A^{s\times s})^{r+1}$. Observe that the zero-dimensional variety
  $\pi(\mathcal{H})\cap(\mathcal{K}^{s\times s})^{r+1}$ contains all
  sequences of $(\mathcal{K}^{s\times s})^{r+1}$ which are not hitting
  for $V$ and $F$.
  \begin{claim}
    $\#\big(\pi(\mathcal{H})\cap(\mathcal{K}^{s\times
      s})^{r+1}\big)\le (2pd)^{2n}\kappa^{s^2(r+1)-1}$.
  \end{claim}
  \begin{proof}[Proof of the Claim]
    Observe $\pi^{-1}\big(\pi(\mathcal{H})\cap(\mathcal{K}^{s\times
      s})^{r+1}\big)=\mathcal{H}\cap\big(\A^n\times
    (\mathcal{K}^{s\times s})^{r+1}\big)$. Let $C_1\klk C_m$ be the
    irreducible components of $\mathcal{H}\cap\big(\A^n\times
    (\mathcal{K}^{s\times s})^{r+1}\big)$. As the image under $\pi$ of
    each component $C_j$ of $\mathcal{H}\cap\big(\A^n\times
    (\mathcal{K}^{s\times s})^{r+1}\big)$ is a point of
    $\pi(\mathcal{H})\cap(\mathcal{K}^{s\times s})^{r+1}$ we conclude
    \begin{equation}\label{eq:aux_hitting_seq_1}
      \#\big(\pi(\mathcal{H})\cap(\mathcal{K}^{s\times
        s})^{r+1}\big)\le m\le \sum_{i=1}^m\deg \overline{C_i}= \deg\big(
      \overline{\mathcal{H}}\cap\big(\A^n\times (\mathcal{K}^{s\times
        s})^{r+1}\big)\big)
    \end{equation}
    (here $\overline{C_i}$ denotes the Zariski closure of $C_i$ in $\A
    ^n\times (\A^{s\times s})^{r+1}$). It is easy to see that the
    affine variety $\A^n\times (\mathcal{K}^{s\times s})^{r+1}$ can be
    defined by the vanishing of $s^2(r+1)$ univariate polynomials of
    degree $\kappa$. Therefore, by \cite[Proposition~2.3]{HeSc82} it
    follows that
    \begin{equation}\label{eq:aux_hitting_seq_2}\deg\big(
      \overline{\mathcal{H}}\cap\big(\A^n\times (\mathcal{K}^{s\times
        s})^{r+1}\big)\big)\le \deg\overline{\mathcal{H}} \cdot
      \kappa^{s^2(r+1)-1}
    \end{equation}
    holds. On the other hand, the B\'ezout inequality implies
    \begin{equation}\label{eq:aux_hitting_seq_3}
      \deg\overline{\mathcal{H}}\le\deg
      \overline{V}\cdot(p(d+1))^{r+1}\le (2pd)^{2n}.
    \end{equation}
    Combining (\ref{eq:aux_hitting_seq_1}),
    (\ref{eq:aux_hitting_seq_2}) and (\ref{eq:aux_hitting_seq_3}) we
    easily deduce the statement of the claim.
  \end{proof}
  Following the previous claim the probability to find a nonhitting
  sequence for $V$ and $F$ in $(\mathcal{K}^{s\times s})^{r+1}$ is at
  most
  \[
  \frac{(2pd)^{2n}\kappa^{s^2(r+1)-1}}{\kappa^{s^2(r+1)}}=
  \frac{(2pd)^{2n}}{\kappa}=\frac{(2pd)^{2n}}{(4pd)^{2n}}=
  \frac{1}{4^n}.
  \]
  This implies Lemma~\ref{l:3}.
\end{proof}
\subsection{Algebraic characterization of degeneracy loci}
Let $U_2\klk U_s$ be new indeterminates. For $1 \le i \le r$ let
$U^{(i)}$ be the $(s\times (s-i+1))$-matrix
\[
U^{(i)}:=
\left[ \begin{array}{cccc}
  1 & 0 & \cdots  & 0 \\
  U_2 & 1 & \ddots & \vdots \\
  U_3 & U_2 & \ddots & 0 \\
  \vdots & U_3 & \ddots & 1 \\
  U_i & \vdots & \ddots & U_2 \\
  \vdots & U_i &  & U_3 \\
  U_{s-1} & \vdots & \ddots & \vdots \\
  U_s & U_{s - 1} & \cdots & U_i
\end{array} \right],
\]
and let $U:= U^{(s - p +1)}$. With these notations the following
assertion holds.
\begin{lemma}\label{l:0}
  Let $1\le i \le r$. Any point $x\in V$ belongs to $W(a_i)$ if and
  only if the conditions
  \[
  \det (F(x)\cdot U)\neq 0\quad\text{and}\quad\det (T(a_i)(x)\cdot
  U^{(i)})= 0
  \]
  are satisfied identically.
\end{lemma}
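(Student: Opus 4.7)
The plan is to expand both determinants via the Cauchy--Binet formula and reduce the lemma to a linear independence statement for the maximal minors of the shifted matrices $U$ and $U^{(i)}$. Concretely, the $(j,k)$-entry of $U^{(i)}$ equals $U_{j-k+1}$ when $j\ge k$ and $0$ otherwise (with the convention $U_1:=1$), so that for any $J=\{j_1<\cdots<j_m\}\subseteq\{1,\ldots,s\}$ with $m:=s-i+1$ the row-submatrix $U^{(i)}_J$ has $(a,k)$-entry $U_{j_a-k+1}$. Cauchy--Binet then yields
\begin{equation*}
\det\bigl(F(x)\cdot U\bigr)=\sum_{|S|=p}\det F(x)_S\cdot\det U_S,\qquad
\det\bigl(T(a_i)(x)\cdot U^{(i)}\bigr)=\sum_{|S|=s-i+1}\det T(a_i)(x)_S\cdot\det U^{(i)}_S,
\end{equation*}
where $F(x)_S$ and $T(a_i)(x)_S$ denote the column submatrices on $S$ and $U_S$, $U^{(i)}_S$ the corresponding row submatrices.

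Granting the key claim that, for every $1\le m\le s$, the maximal $m$-minors of $U^{(s-m+1)}$ are $\C$-linearly independent in $\C[U_2,\ldots,U_s]$, the lemma drops out: the first determinant is nonzero as a polynomial in the $U_j$'s iff some $\det F(x)_S$ is nonzero iff $\rk F(x)=p$ iff $x\in W$; and, assuming $x\in W$, the second vanishes identically iff every $(s-i+1)$-minor of $T(a_i)(x)$ vanishes iff $\rk T(a_i)(x)<s-i+1$ iff $x\in W(a_i)$.

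The hard part will be to establish this linear independence of the maximal minors of $U^{(i)}$. The route I would take goes through the Jacobi--Trudi identity. The assignment $U_j\mapsto (-1)^{j-1}e_{j-1}(x_1,\ldots,x_{s-1})$, where $e_\ell$ denotes the $\ell$-th elementary symmetric polynomial, extends to a $\C$-algebra isomorphism $\C[U_2,\ldots,U_s]\to\Lambda_{s-1}$ onto the ring of symmetric polynomials in $s-1$ variables. Under this substitution $\det U^{(i)}_J$ becomes, up to an overall sign, $\det(e_{j_a-k})_{a,k}$; reversing the orders of rows and columns identifies the latter with the Jacobi--Trudi expression for the Schur function $s_{\lambda(J)'}$ attached to the conjugate of the partition
\begin{equation*}
\lambda(J)=(j_m-m,\ j_{m-1}-m+1,\ \ldots,\ j_1-1).
\end{equation*}
Since $J\mapsto\lambda(J)$ is a bijection onto the partitions with at most $m$ parts and largest part at most $s-m$, distinct subsets $J$ produce Schur functions indexed by distinct partitions $\lambda(J)'$, each of length at most $s-1$; such Schur functions are $\C$-linearly independent in $\Lambda_{s-1}$. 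Pulling back through the isomorphism gives the required linear independence and completes the proof.
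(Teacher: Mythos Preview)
Your proof is correct and follows the same skeleton as the paper's: both expand $\det(F(x)\cdot U)$ and $\det(T(a_i)(x)\cdot U^{(i)})$ via Cauchy--Binet and reduce the statement to the $\C$-linear independence of the maximal minors $U_K$ (resp.\ $U^{(i)}_J$). The one substantive difference is in how that linear independence is established. The paper simply invokes the proof of \cite[Theorem~2]{KaltSau91} for this fact, while you supply a self-contained argument: the substitution $U_j\mapsto(-1)^{j-1}e_{j-1}(x_1,\ldots,x_{s-1})$ identifies $\C[U_2,\ldots,U_s]$ with $\Lambda_{s-1}$, and under it each minor $\det U^{(i)}_J$ becomes (up to a global sign) the dual Jacobi--Trudi determinant $\det(e_{j_a-k})=s_{\lambda(J)'}$, with $J\mapsto\lambda(J)$ a bijection onto partitions fitting in an $m\times(s-m)$ box; linear independence then comes from that of Schur polynomials of length $\le s-1$. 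Your route trades an external citation for a short appeal to symmetric-function machinery and has the advantage of being self-contained; otherwise the two arguments are interchangeable.
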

\begin{proof}
  Let $x$ be any point of $V$ which satisfies the condition $\det
  (F(x)\cdot U)\neq 0$. Then $F(x)$ must be of maximal rank $p$ and
  hence $x$ belongs to $W$.
  \par
  Suppose now that $x$ belongs to $W$. Let $K$ run over all subsets of
  $\{1\klk s\}$ of cardinality $p$. Denote by $F(x)_K$ and $U_K$ the
  $p$-minors of $F(x)$ and $U$ corresponding to the columns of $F(x)$
  and rows $U$ indexed by the elements of $K$.  The Binet-Cauchy
  formula yields
  \[
  \det (F(x)\cdot U) = \sum_{\substack{K\subseteq \{1\klk s\}\\\#
      K=p}}F(x)_K U_K.
  \]
  From the proof of~\cite[Theorem~2]{KaltSau91} we deduce that for
  $K\subseteq \{1\klk s\}$, $\# K=p$ all the minors $U_K $ are
  linearly independent over $\C$. Since $x$ belongs to $W$ there
  exists a subset $K$ of $\{1\klk s\}$ of cardinality $p$ with
  $F(x)_K\neq 0$. This implies $\det (F(x)\cdot U)\neq 0$.
  \par
  Using the same kind of arguments one shows that, for $x\in V$, the
  condition $\det (T(a_i)(x)\cdot U^{(i)})=0$ is equivalent to $\rk
  T(a_i)(x)< s-i +1$. Lemma~\ref{l:0} follows now easily.
\end{proof}
We define the \emph{point finding problem associated with the pair
  $(V,F)$} as the problem to decide whether $W(a_r)$ is empty, and if
not to find all the points of the zero-dimensional degeneracy locus
$W(a_r)$.
\par
The degree of this problem is the maximal degree of the Zariski
closures of all degeneracy loci $W(a_i)$, for $1\le i \le r$, in the
ambient space $\A^n$ of $V$. Observe that this degree does not
depend of the particular generic choice of the $((s-p)\times
s)$-matrix $a$ (compare to~\cite[Section~4]{BaGiHeSaSc10}).
\section{Examples}
\label{s:ex}
\subsection{Polar varieties}
\label{s:ex1}
Let $\xon$ be indeterminates over $\C$, $1\le p \le n$, and let
$G_1\klk G_p$ be a reduced regular sequence of polynomials in
$\C[\xon]$. We denote the Jacobian of $G_1\klk G_p$ by
\[
J(G_1\klk G_p):=
\begin{bmatrix}
  \frac{\partial G_1}{\partial X_1} & \cdots & \frac{\partial
    G_1}{\partial X_n}\\
  \vdots & &\vdots\\
  \frac{\partial G_p}{\partial X_1} &\cdots & \frac{\partial
    G_p}{\partial X_n}\\
\end{bmatrix}.
\]
Fix a $p$-minor $\Delta$ of $J(G_1\klk G_p)$ and let
\[
V:=\{G_1=0\klk G_p=0\}_{\Delta}.
\]
Then $V$ is a smooth equidimensional quasi-affine subvariety of $\A^n$
of dimension $r:= n-p$.  Let $s:=r+p=n$ and let $F\in \C[V]^{p\times
  s}$ be the $(p\times s)$-matrix induced by $J(G_1\klk G_p)$ on
$V$.
\par
For a given generic complex $((s-p)\times s)$-matrix $a$ and for $1\le
i\le r$ the degeneracy locus $W(a_i)$ is the $i$-th generic (classic)
polar variety of $V$ associated with the complex $((s-p-i+1)\times
s)$-matrix $a_i$ (see details in~\cite{BaGiHeSaSc10}).
\par
Proposition~\ref{p:1}, Corollary~\ref{c:1} and Theorem~\ref{t:2} above
say that the $i$-th generic (classic) polar variety of $V$ is empty or
a normal Cohen-Macaulay subvariety of $V$ of pure codimension $i$
(compare to~\cite[Theorem~2]{BaGiHeSaSc10}). From
\cite[Section~3.1]{BaGiHeSaSc10} we deduce that such a generic polar
variety is not necessarily smooth. Hence, smoothness of our degeneracy
loci cannot be expected in general. If the coefficients of
$G_1,\dots,G_p$ and the entries of the $((n-p)\times n)$-matrix $a$
are real, and if the real trace of $\{G_1=0,\dots,G_p=0\}$ is smooth
and compact, then there exists a $p$-minor $\Delta$ of
$J(G_1,\dots,G_p)$ such that the polar varieties associated with $a$
contain real points and are therefore nonempty. The generic polar
varieties form then a strictly descending chain (see~\cite{BaGiHePa04}
and \cite[Proposition~1]{BaGiHePa05}).
\subsection{Composition of polynomial maps}
\label{s:ex2}
Let $1\le p\le n$ and $Q_1\klk Q_n$, $P_1\klk P_p$ be polynomials of
$\C[\xon]$ such that $P_1\klk P_p$ form a reduced regular
sequence. Moreover, let
\[
(G_1\klk G_p):=(P_1\klk P_p)\circ (Q_1\klk Q_n)
\]
be the composition map defined for $1\le k \le p$ by
\[
G_k(\xon):= P_k(Q_1(\xon), \ldots Q_n(\xon)).
\]
Suppose that $G_1\klk G_p$ constitute a reduced regular sequence
in $\C[\xon]$. Fix a $p$-minor $\Delta$ of the Jacobian
$J(G_1\klk G_p)$. Then
\[
V:=\{G_1=0\klk G_p=0\}_{\Delta}
\]
is a smooth quasi-affine subvariety of $\A^n$ of dimension
$r:=n-p$. The morphism defined by $(Q_1\klk Q_n)$ maps $V$ into
\[
\V:=\{P_1=0\klk P_p=0\}.
\]
We suppose that this morphism of affine varieties is dominant,
\emph{i.e.},
\[\overline{(Q_1\klk Q_n)(V)}=\V.\]
Observe that for any point $x\in V$ the variety $\V$ is smooth at
$y=(Q_1(x)\klk Q_n(x))$.
\par
Let $s:=r+p=n$ and let $F$ be the $(p\times s)$-matrix induced by
$J(P_1\klk P_p)\circ (Q_1\klk Q_n)$ on $V$. Let $a\in \C^{(s-p)\times
  s}$ be a generic complex matrix and denote by $\widetilde{W}(a_i)$
the $i$-th polar variety of $\V$ associated with $a_i$, for $1\le i
\le r$. Then we have $W=V$, and the $i$-th degeneracy locus $W(a_i)$
of $W$, namely
\begin{multline*}
  W(a_i)=\left\{x\in V \mid \rk \! \begin{bmatrix}F(x)\\
    a_i \end{bmatrix}<s-i+1\right\}=\\
  =\left\{x\in V \mid \rk \! \begin{bmatrix}
    J(P_1\klk P_p)\circ (Q_1\klk Q_n)(x)\\
    a_i
  \end{bmatrix}<s-i+1\right\},
\end{multline*}
is the $(Q_1\klk Q_n)$-preimage of $\widetilde{W}(a_i)$.
\subsection{Dominant endomorphisms of affine spaces}
\label{s:ex3}
Let $\Fon\in \C[\xon]$, $V:=\A^n$, $p:=1$, $s:=p+r=1+n$,
$F:=\begin{bmatrix}F_1\klk F_n,1\end{bmatrix}\in \C^{1\times s}$ and
let $a\in\C^{n\times s}$ be a generic complex matrix. Observe
$W=V=\A^n$ and that, for any $1\le i \le n$, the degeneracy locus
$W(a_i)$ is a closed affine subvariety of $\A^n$. We are now going to
analyze the $n$-th degeneracy locus $W(a_n)$.
\begin{lemma}\label{l:polar_var_for_endomorphism}
  The degeneracy locus $W(a_n)$ is non-empty if, and only if, the
  endomorphism $\Psi:\A^n \longrightarrow \A^n$ defined by
  $\Psi(x):=(F_1(x)\klk F_n(x))$ is dominant.  In this case the
  cardinality $\# W(a_n)$ of $W(a_n)$ equals the cardinality of a
  generic fiber of $\Psi$.
\end{lemma}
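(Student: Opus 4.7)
The plan is to compute $W(a_n)$ explicitly and identify it with a fiber of $\Psi$ over a generic point of $\A^n$, after which the lemma becomes essentially a statement about generic fibers of polynomial maps.

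First I would observe that, because the last entry of the $1\times s$ matrix $F=[F_1\klk F_n,1]$ is the constant~$1$, we have $\rk F(x)=1$ for every $x\in \A^n$, hence $W=V=\A^n$. From the definition $a_i$ consists of the first $s-p-i+1$ rows of $a$, so for $i=n$ we get $s-p-n+1=(n+1)-1-n+1=1$ row, namely the first row $(a_{1,1}\klk a_{1,n+1})$ of $a$. Consequently
\[
T(a_n)(x)=\begin{bmatrix} F_1(x)&\cdots&F_n(x)&1\\ a_{1,1}&\cdots&a_{1,n}&a_{1,n+1}\end{bmatrix}
\]
is a $2\times(n+1)$ matrix, and $W(a_n)$ is the locus where its two rows are linearly dependent. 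Since $a$ is generic we may assume $a_{1,n+1}\ne 0$, and linear dependence then forces the first row to equal $a_{1,n+1}^{-1}$ times the second. Setting $y:=(a_{1,1}/a_{1,n+1}\klk a_{1,n}/a_{1,n+1})\in \A^n$, we obtain
\[
W(a_n)=\{x\in\A^n\mid F_i(x)=a_{1,i}/a_{1,n+1},\ 1\le i\le n\}=\Psi^{-1}(y).
\]
The generic choice of $a$ implies that $y$ is a generic point of $\A^n$, in the sense that $y$ avoids any prescribed proper Zariski closed subset of $\A^n$.

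From here the two directions of the equivalence follow from the standard dichotomy for polynomial maps. If $\Psi$ is \emph{not} dominant, then $\overline{\Psi(\A^n)}$ is a proper closed subvariety of $\A^n$, the generic $y$ lies outside it, and $W(a_n)=\Psi^{-1}(y)=\emptyset$. Conversely, if $\Psi$ is dominant, then $\Psi(\A^n)$ contains a nonempty Zariski open subset $\Omega\subseteq \A^n$, so for generic $a$ we have $y\in\Omega$ and therefore $W(a_n)\ne\emptyset$. Moreover, by Proposition~\ref{p:1} the set $W(a_n)$ is of pure codimension $n$ in $\A^n$, hence finite.

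For the cardinality assertion, the essential fact is that a dominant morphism $\Psi:\A^n\to\A^n$ between irreducible varieties of equal dimension induces a finite field extension $\Psi^*\C(\A^n)\hookrightarrow \C(\A^n)$, and there is a nonempty Zariski open subset $\Omega'\subseteq\A^n$ on which the fiber cardinality $\#\Psi^{-1}(y)$ is constant and equals $[\C(\A^n):\Psi^*\C(\A^n)]$; this common value is by definition the cardinality of a generic fiber of $\Psi$. The generic choice of $a$ ensures that the associated $y$ lies in $\Omega'$, giving $\#W(a_n)=\#\Psi^{-1}(y)=\deg\Psi$. The only mildly delicate point is verifying that "generic $a$" really translates to "generic $y$", but this is immediate because the rational map $a\mapsto y$ is dominant onto $\A^n$ as soon as $a_{1,n+1}\ne 0$; thus the pullback of any nonempty Zariski open subset of $\A^n$ is a nonempty Zariski open subset in the space of matrices $a$.
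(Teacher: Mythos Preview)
Your proof is correct and follows essentially the same approach as the paper: both identify $W(a_n)$ with the fiber $\Psi^{-1}(y)$ for $y=(a_{1,1}/a_{1,n+1}\klk a_{1,n}/a_{1,n+1})$ via the linear-dependence condition on the two rows of $T(a_n)$, and then appeal to genericity of $y$. You spell out more of the surrounding justification (why $W=\A^n$, why generic $a$ yields generic $y$, the constancy of fiber cardinality on an open set via the function-field degree), whereas the paper is terser on these points, but the core argument is identical.
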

\begin{proof}
  Suppose that $W(a_n)$ is nonempty and let $x$ be a point of
  $W(a_n)$. Then there exists a $\lambda \in \C$ such that
  $(F_1(x)\klk F_n(x), 1)=\lambda(a_{1,1}\klk a_{1,n},
  a_{1,n+1})$. This implies $(F_1(x)\klk
  F_n(x))=\frac{1}{a_{1,n+1}}(a_{1,1}\klk a_{1,n})$. The right-hand
  side of this equation is therefore a generic point of $\A^n$ with a
  zero-dimensional $(F_1\klk F_n)$-fiber.  Hence, the endomorphism
  $\Psi$ of $\A^n$ is dominant.
  \par
  Suppose now that $\Psi$ is dominant. Then we may assume without loss
  of generality that there exists a point $x\in \A^n$ with
  $(F_1(x)\klk F_n(x))=\frac{1}{a_{1,n+1}}(a_{1,1}\klk a_{1,n})$. This
  implies the equation $(F_1(x)\klk F_n(x), 1)=\lambda (a_{1,1}\klk
  a_{1,n}, a_{1,n+1})$ with $\lambda=\frac{1}{a_{n+1}}$. Hence $x$
  belongs to $W(a_n)$ and thus $W(a_n)$ is not empty. Moreover, $\#
  W(a_n)$ equals the cardinality of the $(F_1\klk F_n)$-fiber of
  $\frac{1}{a_{1,n+1}}(a_{1,1}\klk a_{1,n})$.
\end{proof}
Suppose now that the morphism $\Psi$ is dominant. Then the degeneracy
loci of $(\A^n,F)$ form a descending chain
\[
\A^n\supsetneq W(a_1)\supsetneq \cdots \supsetneq W(a_n)\neq
\emptyset,
\]
where, for $1\le i < n$, the $(i+1)$-th degeneracy locus $W(a_{i+1})$
is a closed affine subvariety of $W(a_i)$ of pure codimension one in
$W(a_i)$.
\subsection{Homotopy}
\label{s:ex4}
Let $F_1\klk F_n$ and $G_1\klk G_n$ be reduced regular sequences of
$\C[\xon]$. We consider the algebraic family
\[
\{\lambda F_1+\mu G_1=0\klk \lambda F_n+\mu G_n=0\},\quad (\lambda ,
\mu) \in \C^2\setminus \{(0,0)\}
\]
as a homotopy between the zero-dimensional varieties $\{F_1=0\klk
F_n=0\}$ and $\{G_1=0\klk G_n=0\}$.
\par
We are going to analyze this homotopy. For this purpose let $V:=\A^n$,
$p:=2$, $s:=n+p=n+2$ and
\[
F:=\begin{bmatrix}
  F_1&\cdots & F_n & 1& 0\\
  G_1&\cdots & G_n & 0&1
\end{bmatrix}.
\]
Furthermore, let $a\in \C^{(s-p)\times s}$ be generically chosen.
Then we have $W=V=\A^n$ and for any $1\le i \le n$ the degeneracy
locus $W(a_i)$ is a closed affine subvariety of $\A^n$. From the
Exchange Lemma of~\cite{BaGiHeMb01} we deduce
\[
\# W(a_n)=\# \{a_{1,n+1} F_1+a_{1,n+2} G_1=a_{1,1}\klk a_{1,n+1}
F_n+a_{1,n+2}G_n=a_{1,n}\}.
\]
Thus $W(a_n)$ may be interpreted as a deformation of
\[\{a_{1,n+1} F_1+a_{1,n+2} G_1=0\klk a_{1,n+1}
F_n+a_{1,n+2}G_n=0\}.\]
\section{Algorithms}
\label{s:2}
We are going to present two procedures, namely our main algorithm
that computes an algebraic description of the set $W(a_r)$, and a
procedure to check membership to a degeneracy locus.
\subsection{Notations}
Let $n, d, p, r, q, s, L$ be integers with $r=n-q$ and $s\ge p+r$
and let $G_1\klk G_q, H, F_{k,l}$, for $1\le k \le p$ and $1\le l
\le s$, be polynomials of $\Q[\xon]$ given as outputs of an
essentially division-free arithmetic circuit $\beta$ of size $L$.
This means that $\beta$ contains divisions only by elements of $\Q$
(for details about arithmetic circuits we refer to~\cite{BuClSh97}).
\par
Let $d$ be an upper bound for the degrees of $G_1\klk G_q$ and
$F_{k,l}$, for $1\le k \le p$ and $1\le l \le s$. We suppose that
$G_1\klk G_q$ and $H$ satisfy the following two conditions:
\begin{itemize}
\item $G_1\klk G_q$ form a reduced regular sequence outside of
  $\{H=0\}$,
\item $V:=\{G_1=0\klk G_q=0\}_H$ is a smooth quasi-affine variety.
\end{itemize}
For $1\le k \le p$ and $1\le l \le s$, let $f_{k,l}\in \C[V]$ be the
restriction of $F_{k,l}$ to $V$ and let $F:=[f_{k,l}]_{1\le k \le p, 1
  \le l \le s}$. Let $\delta^*$ be the degree of the point finding
problem associated with the pair $(V,F)$, that previously has been
introduced as the maximal degree of the Zariski closures of all
degeneracy loci $W(a_i)$, $1\le i \le r$, in the ambient space
$\A^n$ of $V$. We write
\[
\delta_G:=\max\{\deg \overline{\{G_1=0\klk G_j=0\}_H} \mid 1\le j \le
q\},
\]
and $\delta:=\max\{ \delta_G, \delta^* \}$. We call $\delta$ the
\emph{system
  degree} of $G_1\klk G_q=0$, $H\neq 0$ and $[F_{k,l}]_{1\le k \le p,
  1\le l \le s}$.
\par
Fix a generic matrix $a\in \Q^{(s-p)\times s}$. We are going to
design a uniform bounded error probabilistic procedure which takes
$\beta$ as input and decides whether $W(a_r)$ is empty and, if not,
computes a description of $W(a_r)$ in terms of a primitive element.
More precisely, for a new indeterminate $T$, the procedure outputs
the coefficients of univariate polynomials $P, Q_1\klk Q_n\in \Q[T]$
such that $P$ is separable, $\deg Q_1< \deg P\klk \deg Q_n < \deg P$
and such that
\[
W(a_r)=\{(Q_1(t)\klk Q_n(t)) \mid t\in \C : P(t)=0\}
\]
holds. Following \cite[Section~3.2]{GiLeSa01}, such a description is
called a \emph{geometric resolution} of $W(a_r)$.
\par
In the sequel we refer freely to terminology, mathematical results
and subroutines of~\cite{GiLeSa01} where the first streamlined
version of the classical Kronecker algorithm was described. In order
to simplify the exposition we shall refrain from the presentation of
details which only ensure the appropriate genericity properties for
the procedure. The following account requires some familiarity with
technical aspects of the classical Kronecker algorithm. A standalone
presentation of the algorithm from the mathematical point of view is
contained in~\cite{DurLe08}.
\subsection{The main algorithm}
As first task, we compute a description of the variety $V$. For this
purpose, we use the main tools of~\cite[Algorithm~12]{GiLeSa01} in
the following way. As input we take the representation of $G_1,
\ldots, G_q$ and $H$ by the circuit $\beta$. Although the system
$G_1=0,\dots,G_q=0$ contains $n\ge q$ variables, we may execute just
the $q$ first steps of the main loop
of~\cite[Algorithm~12]{GiLeSa01}, to obtain a \emph{lifting fiber}
for $V$~\cite[Definition~4]{GiLeSa01}. This lifting fiber consists
of:
\begin{itemize}
\item the \emph{lifting system} $G_1,\ldots,G_q$,
\item an invertible $n \times n$ square matrix $M$ with rational entries
 such that the new coordinates $Y := M^{-1} X$ are in
  \emph{Noether position} with respect to $V$,
\item a rational \emph{lifting point} $z=(z_1,\ldots,z_r)$ for $V$ and
  the lifting system $G_1,\ldots,G_q$,
\item rational coefficients $\lambda_{r+1},\dots,\lambda_n$ defining a
  \emph{primitive element} $u:=
  \lambda_{r+1}Y_{r+1}+\cdots+\lambda_{n} Y_{n}$ of $V^{(z)} := V \cap
  \{ Y_1 - z_1 = 0, \ldots, Y_r - z_r = 0 \}$,
\item a polynomial $Q\in \Q[T]$ of minimal degree such that $Q(u)$
  vanishes on $V^{(z)}$,
\item $n-r$ polynomials $v_{r+1},\ldots,v_n$ of $\Q[T]$, of degree
  strictly smaller than $\deg Q$ such that the equations
  $Y_1-z_1=0,\dots,Y_r-z_r=0,Y_{r+1}-v_{r+1}(T)=0,\dots,Y_n-v_n(T)=0,
  Q(T)=0$ define a \emph{parameterization} of
  $V^{(z)}$ by the zeros of $Q$.
\end{itemize}
The computation of these items depends on the choice of at most
$O(n^2)$ parameters in $\Q$. If the parameters are chosen correctly,
the algorithm returns these items. Otherwise the algorithm fails.  The
incorrect choices of these parameters are contained in a hypersurface
whose degree is \emph{a priori} bounded (see~\cite{GiLeSa01}).
Therefore the whole procedure yields a bounded error probabilistic
algorithm (compare~\cite{Zippel79, Schwartz80}).
The error can be bounded uniformly with respect to the input parameters, whatever they are (dimension of the ambient space $n$, degree and coefficients of the input equations, etc.).\\
 We summarize the outcome in the
following statement.
\begin{lemma}\label{lm:costV}
  Let notations and assumptions be as above. There exists a uniform
  bounded error probabilistic algorithm over $\Q$ which computes a
  lifting fiber of $V$ in time $L (nd)^{O(1)}\delta_G^2$.
\end{lemma}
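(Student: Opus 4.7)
The plan is to invoke the classical Kronecker incremental algorithm, specifically Algorithm~12 of~\cite{GiLeSa01}, but run it for only the first $q$ steps instead of all $n$ steps that its original formulation envisions for a zero-dimensional system. Since the input $G_1,\dots,G_q$ together with $H$ is given by the essentially division-free circuit $\beta$ of size $L$, we can feed this circuit into the procedure unchanged. The output after $q$ incremental steps is, by construction, a lifting fiber for $V=\{G_1=0,\ldots,G_q=0\}_H$, as described in~\cite[Definition~4]{GiLeSa01}. The hypotheses needed to run the algorithm correctly up to step $q$ are exactly the two assumptions imposed on the input: that $G_1,\dots,G_q$ form a reduced regular sequence outside $\{H=0\}$, which guarantees that each intermediate variety $V_j:=\{G_1=0,\dots,G_j=0\}_H$ is equidimensional of codimension~$j$ in $\A^n$, and that $V$ is smooth, which ensures the Noether normalization and primitive-element-based parameterization are well-defined along the lifting curves.

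First I would carry out the initialization (step $j=0$, with $V_0=\A^n_H$), which amounts to picking a random invertible matrix $M\in\Q^{n\times n}$ providing a change of coordinates $Y=M^{-1}X$, a lifting point $z=(z_1,\dots,z_r)$, and coefficients $\lambda_{r+1},\dots,\lambda_n$ defining a primitive element. Then, for $j=1,\dots,q$, I would execute the main loop of~\cite[Algorithm~12]{GiLeSa01}: intersect the current geometric resolution with the hypersurface $G_j=0$, lift the curve parameterizations along the fiber coordinates, and clean the result to obtain a geometric resolution of a lifting fiber of $V_j$. Genericity of all the choices made in the initialization together with a finite set of auxiliary random parameters (totalling $O(n^2)$ values in $\Q$) ensures that Noether position, the primitive element property and the lifting point are all correct; the set of bad choices is contained in a hypersurface of $\Q$-parameter space of degree polynomially bounded in $n$, $d$ and $\delta_G$, so by the Zippel--Schwartz lemma~\cite{Zippel79,Schwartz80} drawing samples from a sufficiently large (but uniformly bounded) range gives a constant lower bound on the success probability, independently of the input.

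For the complexity, each incremental step $j$ runs in time $L(nd)^{O(1)}\delta_j^2$, where $\delta_j:=\deg\overline{V_j}$, this being the standard Kronecker bound in which the dominant $\delta_j^2$ factor comes from linear algebra with univariate polynomials of degree $\delta_j$ and arithmetic on the circuit of size $L$ evaluated on symbolic points of corresponding size. Summing from $j=1$ to $q\le n$ and bounding every $\delta_j$ by $\delta_G$ yields the claimed overall bound $L(nd)^{O(1)}\delta_G^2$; note that only the intermediate degrees $\delta_j$, not the full system degree $\delta$, enter this analysis, because we stop the main loop once the lifting fiber of $V$ has been assembled and we do not yet incorporate any information about the matrix $F$ or the degeneracy loci $W(a_i)$.

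The main obstacle, insofar as there is one, is not mathematical but bibliographic: one must check that~\cite[Algorithm~12]{GiLeSa01} is formulated flexibly enough to allow the main loop to stop at step $q<n$ and to handle a denominator $H$ consistently in the Noether normalization, lifting, and cleaning substeps. Both points are in fact addressed (the algorithm processes one equation at a time and localizes by products of non-vanishing denominators, with $H$ treated as such a denominator throughout), so the argument reduces to a direct application of the existing analysis.
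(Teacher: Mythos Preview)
Your proposal is correct and follows essentially the same approach as the paper: both invoke the first $q$ steps of~\cite[Algorithm~12]{GiLeSa01} on the circuit~$\beta$ and then quote the standard Kronecker complexity bound, with the paper compressing this into a citation of~\cite[Theorem~1]{GiLeSa01}. The only detail the paper makes explicit that you leave implicit is the absorption of the logarithmic factors from fast univariate arithmetic: the paper notes that the B\'ezout inequality gives $\delta_G=O(d^n)$, whence $\delta_G^2\log^{O(1)}\delta_G=(nd)^{O(1)}\delta_G^2$.
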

\begin{proof}
  Apply~\cite[Theorem~1]{GiLeSa01} taking care to perform products of
  univariate polynomials in quasi-linear time. The B\'ezout inequality
  implies $\delta_G=O(d^n)$. The complexity bound of Lemma \ref{lm:costV}
  follows now from $\delta_G^2 \log^{O(1)} (\delta_G)
  = (nd)^{O(1)} \delta_G^2$.
\end{proof}
%
%
By Lemma~\ref{l:3} we may choose with high probability of success a
hitting sequence $(b_1,\dots, b_{r+1})$ of regular integer $(s\times
s)$-matrices and $p$-minors $\bfs{\Delta}_1\klk \bfs{\Delta}_{r+1}$
of the matrices $F\cdot b_1,\dots,F\cdot b_{r+1}$ such that $W =
V_{\bfs{\Delta}_1} \cup \cdots \cup V_{\bfs{\Delta}_{r+1}}$ holds.
\begin{lemma}\label{lm:costW}
  Let notations and assumptions be as above and let be given a lifting
  fiber of $V$. There exists a uniform bounded error probabilistic
  algorithm over $\Q$ which computes lifting fibers for
  $V_{\bfs{\Delta}_1}, \ldots,V_{\bfs{\Delta}_{r+1}}$ in time $L
  (pnd)^{O(1)}\delta_G^2$.
\end{lemma}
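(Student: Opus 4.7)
The plan is to derive, for each $1\le t\le r+1$, a lifting fiber of $V_{\bfs{\Delta}_t}$ from the given lifting fiber of $V$ by retaining its Noether-positioning matrix $M$, lifting point $z$ and primitive element $u=\lambda_{r+1}Y_{r+1}+\cdots+\lambda_n Y_n$, and modifying only the univariate polynomials that describe the zero-dimensional slice over $z$. This reuse is legitimate because $V_{\bfs{\Delta}_t}$ is a principal open subset of the equidimensional smooth variety $V$, hence when nonempty is equidimensional of dimension $r$; the coordinates $Y=M^{-1}X$ therefore remain in Noether position with respect to $V_{\bfs{\Delta}_t}$, and the same $z$ can be used as a lifting point provided the genericity of the initial data is controlled.

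The computation proceeds in two steps for each index $t$. First, I extend the input circuit $\beta$ by $(ps)^{O(1)}$ gates implementing the matrix product $F\cdot b_t$ and the relevant $p\times p$ determinant, obtaining a circuit $\beta_t$ for $\bfs{\Delta}_t$ of size $L+(ps)^{O(1)}$. Second, I evaluate $\beta_t$ along the parameterization of $V^{(z)}$, substituting $Y_i := z_i$ for $i\le r$ and $Y_i := v_i(T)$ otherwise and performing all arithmetic modulo $Q(T)$, to obtain a residue $\varphi_t(T)\in\Q[T]/(Q(T))$. Setting $D_t := \gcd(Q,\varphi_t)$ and $Q_t := Q/D_t$, the roots of $Q_t$ are in bijection with those points of $V^{(z)}$ at which $\bfs{\Delta}_t$ does not vanish, that is, with $V_{\bfs{\Delta}_t}^{(z)}$. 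Reducing $v_{r+1},\ldots,v_n$ modulo $Q_t$ provides the corresponding parameterization, and the tuple consisting of $G_1,\ldots,G_q$ (as lifting system), $M$, $z$, $\lambda_{r+1},\ldots,\lambda_n$, $Q_t$, and the reduced $v_i$'s is the desired lifting fiber of $V_{\bfs{\Delta}_t}$.

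For the complexity, each of the $L+(ps)^{O(1)}$ gates of $\beta_t$ is evaluated on polynomials in $\Q[T]/(Q(T))$, an operation costing $\tilde O(\delta_G)$ by fast univariate arithmetic; the $\gcd$ and exact division contribute another $\tilde O(\delta_G)$. Summing over the $r+1$ values of $t$ and combining with the overhead of the Kronecker subroutines from~\cite{GiLeSa01} employed to certify the lifted data (which is the source of the quadratic factor in $\delta_G$, exactly as in the proof of Lemma~\ref{lm:costV}) yields the announced bound $L(pnd)^{O(1)}\delta_G^2$.

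The main obstacle is ensuring the simultaneous validity of the inherited lifting data for all $r+1$ open subsets $V_{\bfs{\Delta}_t}$: the choices $M$, $z$ and $\lambda_i$ returned by the algorithm of Lemma~\ref{lm:costV} are generic for $V$ but must also be admissible for each $V_{\bfs{\Delta}_t}$. The set of failing choices is contained in a hypersurface whose degree is polynomial in $p$, $n$, $d$ and $\delta_G$, so a Schwartz-Zippel argument along the lines of~\cite{GiLeSa01} controls the failure probability uniformly and maintains the bounded-error probabilistic guarantee.
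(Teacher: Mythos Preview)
Your approach is essentially the same as the paper's: both evaluate the minor $\bfs{\Delta}_t$ along the parameterized fiber and discard the vanishing points via a gcd computation, which is precisely the content of \cite[Algorithm~10]{GiLeSa01} that the paper invokes. Your write-up is in fact more explicit about the mechanics (circuit extension, reduction modulo $Q$, gcd, quotient) than the paper, which simply cites the subroutine.

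There is one noteworthy difference in bookkeeping. The paper does not argue that the original lifting point $z$ remains admissible for every $V_{\bfs{\Delta}_t}$; instead it first invokes \cite[Algorithm~5]{GiLeSa01} to \emph{change} the lifting point, and this step (a global Newton--Hensel lift) is where the quadratic factor $\delta_G^2$ actually enters. The subsequent call to Algorithm~10 costs only $L(pnd)^{O(1)}\delta_G$. Your alternative of reusing $z$ and controlling its genericity via Schwartz--Zippel is perfectly valid, but then the operations you describe cost only $\tilde O\big((L+(ps)^{O(1)})\delta_G\big)$ per index $t$, so your attribution of the $\delta_G^2$ factor to unspecified ``Kronecker subroutines employed to certify the lifted data'' is not substantiated. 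Either accept the looser bound (which of course still holds), or, as the paper does, trace the $\delta_G^2$ to a concrete lifting-point change.
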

\begin{proof}
  Let us fix $1\le j \le r+1$. The given lifting point of $V$ may
  be changed by means of~\cite[Algorithm~5]{GiLeSa01} in time $L
  (nd)^{O(1)}\delta_G^2$. We call~\cite[Algorithm~10]{GiLeSa01}
  with input the lifting fiber of $V$ and the polynomial representing
  $\bfs{\Delta}_j$ (observe that this polynomial can be evaluated
  using $L+O(p^4)$ arithmetic operations). This yields with high
  probability of success a lifting
  fiber of $V_{\bfs{\Delta}_j}$ in time $L (pnd)^{O(1)}\delta_G$
  (compare~\cite[Lemmas~14 and 15]{GiLeSa01}).
\end{proof}
If the varieties $V_{\bfs{\Delta}_1},\dots,V_{\bfs{\Delta}_{r+1}}$
are empty, then $W(a_r)$ is empty and the algorithm stops. We
suppose that this is not the case. We are now going to describe how
we decide whether $W(a_r)_{\bfs{\Delta}_j}$ is empty, and, if not,
how we compute a lifting fiber of $W(a_r)_{\bfs{\Delta}_j}$. In
order to simplify notations, we make without loss of generality the
following assumptions. Let $j:=1$, $b_1$ be the identity matrix,
$\bfs{\Delta}_1:= \Delta$ and $V_{\Delta}= W_{\Delta} \neq
\emptyset$.
\begin{lemma}\label{lm:costW1}
  Let notations and assumptions be as before. For a given lifting
  fiber of $V_{\Delta}=W_{\Delta}$ there exists a uniform bounded
  error probabilistic algorithm over $\Q$ which computes a lifting
  fiber of $W(a_1)_{\Delta \cdot m_1}$
  in time $L (s n d)^{O(1)}\delta^2$.
\end{lemma}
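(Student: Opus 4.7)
The plan is to realize $W(a_1)_{\Delta\cdot m_1}$ as the intersection of the given $V_\Delta=W_\Delta$ with a single hypersurface equation, and to apply one step of the standard Kronecker intersection routine to the given lifting fiber.

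First, recall from the Exchange Lemma cited in Section~\ref{s:1} that on the chart $\Delta\cdot m_1\neq 0$ the first degeneracy locus is cut out by the single $s$-minor $M_s$ of $T(a_1)$, namely
\[
W(a_1)_{\Delta\cdot m_1}=\{M_s=0\}_{\Delta\cdot m_1}.
\]
Since the entries of $T(a_1)$ are either rational constants or the polynomials $F_{k,l}$ given by the circuit $\beta$, Berkowitz' determinant algorithm yields an essentially division-free circuit $\beta'$ of size $L+s^{O(1)}$ that evaluates $M_s$, which is of degree at most $sd$. Similarly $m_1$ is evaluated by a circuit of size $L+s^{O(1)}$ and has degree at most $sd$.

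Next, I would invoke \cite[Algorithm~5]{GiLeSa01} to move, if necessary, the given lifting point of $V_\Delta$ into a sufficiently generic one with respect to the new equation, and then call the one-step intersection routine \cite[Algorithm~10]{GiLeSa01} on the lifting fiber of $V_\Delta$ with the polynomial $M_s$. To guarantee that this call produces a geometric resolution of the right variety, three geometric facts from Section~\ref{s:1} are needed: (i)~$W(a_1)_{\Delta\cdot m_1}$ is empty or equidimensional of codimension~$1$ in $V_{\Delta\cdot m_1}$, by Proposition~\ref{p:1}; (ii)~it is locally a reduced complete intersection, by the comments immediately following the proof of Proposition~\ref{p:1}; and (iii)~the minor $m_1$ does not vanish identically on any irreducible component of $W(a_1)_\Delta$, so that no component of the degeneracy locus is lost when passing to the chart $\{m_1\neq 0\}$, by Lemma~\ref{l:2}. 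Finally, a standard cleaning step removes the spurious components contained in $\{m_1=0\}$ from the output of Algorithm~10, leaving a lifting fiber of $W(a_1)_{\Delta\cdot m_1}$.

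For the complexity, one intersection step of the Kronecker algorithm with an added equation of degree $D$ computable by a circuit of size $\ell$ costs $\ell(nD)^{O(1)}\Delta_{\mathrm{int}}^2$, where $\Delta_{\mathrm{int}}$ bounds the system degrees before and after the step. In our situation $\ell=L+s^{O(1)}$, $D\le sd$, and the relevant system degrees are bounded by $\delta_G$ (for $V_\Delta$) and $\deg\overline{W(a_1)}\le\delta^*$, both $\le\delta$. This yields the claimed bound $L(snd)^{O(1)}\delta^2$. The probability of failure is controlled uniformly, as in \cite{GiLeSa01, DurLe08}, by the fact that the bad choices of auxiliary parameters are confined to a hypersurface of \emph{a priori} bounded degree (Zippel--Schwartz).

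The main obstacle is the verification of the hypotheses under which the Kronecker intersection step produces a correct lifting fiber in one shot (equidimensionality, reducedness, and codimension one of $W(a_1)_{\Delta\cdot m_1}$ inside $V_{\Delta\cdot m_1}$, together with the non-vanishing of $m_1$ on each component of $W(a_1)_\Delta$). These are precisely the geometric statements already proved in Section~\ref{s:1}; once they are invoked, the complexity analysis is a direct application of the cost estimates from \cite[Lemmas~14 and 15, Theorem~1]{GiLeSa01}, combined with quasi-linear multiplication of univariate polynomials as in Lemma~\ref{lm:costV}.
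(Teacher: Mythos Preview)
Your approach is essentially the same as the paper's: intersect $V_\Delta$ with the single hypersurface given by the determinant of the square matrix $T(a_1)$, then localize at $m_1$. Note that for $i=1$ the matrix $T(a_1)$ is already $s\times s$, so $M_s=\det(T(a_1))$ and the appeal to the Exchange Lemma is unnecessary; also, the paper cites \cite[Algorithms~2, 4, 5, 6, 11]{GiLeSa01} rather than Algorithm~10 for the intersection step, so you may want to double-check which routine in \cite{GiLeSa01} actually performs the hypersurface intersection versus the localization.
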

\begin{proof}
  Observe that $W(a_1)= V_{\Delta} \cap \{ \det (T(a_1)) = 0 \}$ holds.
  By Proposition~\ref{p:1} the polynomial representing $\det(T(a_1))$
  does not vanish identically on any irreducible component of
  $V_{\Delta}$. Thus we may use~\cite[Algorithms~2, 4, 5, 6
  and~11]{GiLeSa01} in order to compute a lifting fiber of
  $W(a_1)_{\Delta \cdot m_1}$. Since the polynomials representing
  $\Delta$ and $m_1$ have degrees bounded by $p d$ and can be evaluated
  in time $L + O(s^4)$, Lemma \ref{lm:costW1} follows
  from~\cite[Lemmas~6, 14, and~16]{GiLeSa01}.
\end{proof}
{From} Lemma~\ref{l:2} we deduce that emptiness of $W(a_1)_{\Delta
\cdot m_1}$ implies that of $W(a_1)_{\Delta}$ and hence that of
$W(a_r)_{\Delta}$.

Let $1\le i<r$ and assume that we have computed a lifting fiber of
$W(a_i)_{\Delta \cdot m_i}$.
\begin{lemma}\label{lm:costWi}
  Let notations and assumptions be as before. There exists a uniform
  bounded error probabilistic algorithm over $\Q$ which decides whether
  $W(a_{i+1})_{\Delta}$ is empty and, if not, computes a lifting fiber of
  $W(a_{i+1})_{\Delta \cdot m_{i+1}}$ in time $L (s n d)^{O(1)}
  \delta^2$.
\end{lemma}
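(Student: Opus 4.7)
The plan is to mirror the proof of Lemma~\ref{lm:costW1}: starting from the given lifting fiber of $W(a_i)_{\Delta\cdot m_i}$, first perform a change of chart from $m_i$ to $m_{i+1}$, and then a constant number of Kronecker intersection steps with $(s-i)$-minors of $T(a_{i+1})$ of controlled size.

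The delicate point is the change of chart. Since $T(a_{i+1})$ is obtained from $T(a_i)$ by suppressing its last row and $m_i$ is, up to sign, the leftmost maximal minor of $T(a_{i+1})$, the condition $m_i\neq 0$ forces $T(a_{i+1})$ to have full row rank $s-i$; hence $W(a_{i+1})\cap\{m_i\neq 0\}=\emptyset$ and the given lifting fiber, supported in $\{m_i\neq 0\}$, carries no point of $W(a_{i+1})$. However, by Lemma~\ref{l:2}, $m_i$ does not vanish identically on any irreducible component of $W(a_i)_\Delta$, so the given lifting fiber does record all generic points of $W(a_i)_\Delta$. I would therefore use~\cite[Algorithms~5 and~10]{GiLeSa01} to move the lifting point and swap the saturation denominator from $m_i$ to $m_{i+1}$, producing a lifting fiber of $W(a_i)_{\Delta\cdot m_{i+1}}$; the polynomials $m_i$ and $m_{i+1}$ have degree at most $pd$ and are evaluated by a circuit of size $L+O(s^4)$.

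Once in the chart $\{m_{i+1}\neq 0\}$, Proposition~\ref{p:1} and Corollary~\ref{c:1} imply that $W(a_{i+1})_{\Delta\cdot m_{i+1}}$ is either empty or a reduced, pure-codimension-one subvariety of $W(a_i)_{\Delta\cdot m_{i+1}}$; by the Exchange Lemma of~\cite{BaGiHeMb01} it is cut out inside $W(a_i)_{\Delta\cdot m_{i+1}}$ by the $i+1$ $(s-i)$-minors of $T(a_{i+1})$ bordering $m_{i+1}$. I would feed these minors one by one into~\cite[Algorithm~11]{GiLeSa01}, so that the first nontrivial step drops the dimension by one while each subsequent step either leaves the current variety unchanged (its minor already vanishing on it) or signals that the generic choice of $a$ must be refined. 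Each such minor has degree at most $sd$ and is evaluated with $L+O(s^4)$ operations, so at most $r+1$ intersection steps suffice, each of cost $L(snd)^{O(1)}\delta^2$ by~\cite[Lemmas~6, 14, 15 and~16]{GiLeSa01} combined with quasi-linear univariate polynomial arithmetic. Emptiness of $W(a_{i+1})_\Delta$ coincides with emptiness of the computed $W(a_{i+1})_{\Delta\cdot m_{i+1}}$ by Lemma~\ref{l:2} applied at level $i+1$.

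The main obstacle will be the chart change: the strict disjointness $W(a_{i+1})\cap\{m_i\neq 0\}=\emptyset$ means the naive ``add one more equation in the same chart'' strategy cannot work, and one has to verify that the saturation swap from $m_i$ to $m_{i+1}$ neither drops nor duplicates the irreducible components of $W(a_i)_\Delta$. This verification rests squarely on Lemma~\ref{l:2}; everything else is a routine application of the Kronecker toolbox of~\cite{GiLeSa01}.
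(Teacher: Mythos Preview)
Your overall plan—drop one dimension by intersecting with an $(s-i)$-minor of $T(a_{i+1})$, then use the remaining minors as constraints—is close in spirit to the paper's argument, but the execution has a genuine gap in the step you yourself flag as delicate.

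The first of your bordering minors $N_{s-i}$ \emph{is} $m_i$ (it is the $(s-i)$-minor of $T(a_{i+1})$ on columns $1,\ldots,s-i$), so your first intersection coincides with what the paper does: lift the given fiber to a curve $C\subset\overline{W(a_i)_{\Delta\cdot m_i}}$ and intersect $C$ with $\{m_i=0\}$. The problem is what happens next. After this step you are sitting on the boundary $\{m_i=0\}$, and the only lifting system you have, namely $G_1,\ldots,G_q,M_{s-i+1},\ldots,M_s$, is valid precisely in $\{m_i\neq 0\}$. Your ``chart swap'' via Algorithms~5 and~10 moves the lifting point and cleans by $m_{i+1}$, but it does not produce new lifting \emph{equations}; you never exhibit a reduced regular sequence defining $W(a_i)$ in the chart $\{m_{i+1}\neq 0\}$, and the Exchange Lemma does not hand you one (the sub-minor $m_{i+1}$ is an $(s-i-1)$-minor of $T(a_{i+2})$, not an $(s-i)$-minor of $T(a_{i+1})$). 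Without a lifting system, Algorithm~11 cannot be iterated.

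Even setting the lifting-system issue aside, your claim that ``each subsequent step leaves the current variety unchanged'' is not correct. The set $Y_1:=W(a_i)_{\Delta\cdot m_{i+1}}\cap\{m_i=0\}$ is in general strictly larger than $W(a_{i+1})_{\Delta\cdot m_{i+1}}$: vanishing of one maximal minor of $T(a_{i+1})$ together with $\operatorname{rk}T(a_i)<s-i+1$ does not force $\operatorname{rk}T(a_{i+1})<s-i$. So $Y_1$ may carry spurious components on which some $N_j$ does not vanish, while $N_j$ does vanish identically on the components lying in $W(a_{i+1})$. Feeding such an $N_j$ into an intersection step violates the hypothesis of Algorithm~11 (the new equation must be a non-zero-divisor on each component) and, even informally, would produce a mixed-dimensional object outside the Kronecker framework.

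The paper resolves both issues at once: it performs exactly \emph{one} intersection, namely $C\cap\{m_i=0\}$ cleaned by $H\cdot\Delta\cdot m_{i+1}$, and then \emph{filters} (in the sense of Algorithm~10) the resulting zero-dimensional set by the remaining conditions $N_{s-i},\ldots,N_s$. Filtering requires no lifting system and is insensitive to whether an $N_j$ vanishes on all, some, or no components. After filtering one is left with a lifting fiber of $W(a_{i+1})_{\Delta\cdot m_{i+1}}$, for which the new lifting system $G_1,\ldots,G_q,N_{s-i},\ldots,N_s$ is available for the next round. So the fix to your argument is minimal but essential: replace the ``$i{+}1$ successive intersections'' by ``one intersection with $m_i$, then filter by $N_{s-i+1},\ldots,N_s$''.
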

\begin{proof}
  In Section~\ref{s:1.2} we have seen that the equations $M_{s-i+1}=0
  \klk M_s=0$ intersect transversally at any of their common zeros
  belonging to $V_{m_i}$. Therefore $G_1\klk G_q$ and the polynomials
  representing $M_{s-i+1}\klk M_s$ form a reduced
  regular sequence outside of $\{ \Delta \cdot m_i=0 \}$. From
  Lemma~\ref{l:2} we deduce that $m_i$ does not vanish identically on
  any irreducible component of $W(a_i)_{\Delta}$. Hence the given lifting
  fiber of $W(a_i)_{\Delta \cdot m_i}$ is also a lifting fiber of
  $W(a_i)_{\Delta}$, and $G_1\klk G_q, M_{s-i+1}\klk M_s$
  can be used as a lifting system of the lifting fiber of
  $\overline{W(a_i)_{\Delta \cdot m_i}}$.
  \par
  Applying successively~\cite[Algorithms~4, 5, and~6]{GiLeSa01} we
  produce a Kronecker parame\-terization of a suitable curve $C$ in
  $\overline{W(a_i)_{\Delta \cdot m_i}}$ on which $\Delta
  \cdot m_i$ does not vanish identically.
  \par
  Then we apply~\cite[Algorithm~2]{GiLeSa01} to $C$, $m_i$ and $H
  \cdot \Delta \cdot m_{i+1}$ in order to obtain a lifting fiber of
  $(C \cap \{m_i= 0\})_{\Delta \cdot m_{i+1}}$.
  \par
  Let $N_{s-i},\ldots,N_s$ be the polynomials representing the
  $(s-i)$-minors of $T(a_{i+1})$, given by the columns numbered
  $1,\ldots, s-i-1$ to which we add, one by one, the columns
  $s-i,\ldots, s$. In a way very similar
  to~\cite[Algorithm~10]{GiLeSa01} we can remove the points of the
  given lifting fiber $(C \cap \{m_i= 0\})_{\Delta \cdot m_{i+1}}$
  which are not zeros of $N_{s-i},\ldots,N_s$ in order to obtain a
  lifting fiber of $\overline{W(a_{i+1})_{\Delta \cdot m_{i+1}}}$.
  The time cost of the whole procedure is a consequence
  of~\cite[Lemmas~3, 6, 14, and~16]{GiLeSa01}
\end{proof}
Applying Lemma~\ref{lm:costW1} and Lemma~\ref{lm:costWi} iteratively
we obtain a lifting fiber of the zero-dimensional variety
$W(a_r)_{\bfs{\Delta} \cdot m_r}$ and hence, by Lemma~\ref{l:2},
of $W(a_r)_{\Delta}$.
%
%
Combining all previously described procedures we obtain the
announced main algorithm.
\begin{theorem}\label{t:4}
  Let $n, d, p, r, q, s, L, \delta \in \N$ with $r=n-q$ and
  $s\ge p+r$ be arbitrary and let $G_1\klk G_q, H$ and $F_{k,l}$, for
  $1\le k \le p$, $1\le l \le s$, be polynomials of $\Q[\xon]$ of
  degree at most $d$. Suppose that $G_1\klk G_q$ form a reduced
  regular sequence outside of $\{H=0\}$, the variety $V:=\{G_1=0\klk
  G_q=0\}_H$ is smooth, and the system degree of $G_1=0\klk
  G_q=0$, $H\neq 0$ and $[F_{k,l}]_{1\le k \le p, 1\le l \le s}$ is at
  most $\delta$.
  \par
  Furthermore, suppose that these polynomials are given as outputs of
  an essentially division-free arithmetic circuit $\beta$ in $\Q[\xon]$ of size
  at most $L$. Let $a\in \Q^{(s-p)\times s}$ be a generic matrix. Then
  there exists a uniform bounded error probabilistic algorithm over
  $\Q$ which decides from the input $\beta$ in time
  $L(snd)^{O(1)}\delta^2=(s(nd)^n)^{O(1)}$ whether $W(a_r)$
  is empty and, if this is not the case, computes a geometric
  resolution of $W(a_r)$ (here arithmetic operations and comparisons
  in $\Q$ are taken into account at unit costs.)
\end{theorem}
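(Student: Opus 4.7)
The proof is essentially a synthesis of the ingredients already prepared in Lemmas \ref{lm:costV}--\ref{lm:costWi}, together with the hitting sequence machinery of Lemma \ref{l:3}. The plan is to organize the algorithm as a sequence of phases, to argue that the success probabilities at each phase can be uniformly controlled, and to bound the total time.

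First I would run the algorithm of Lemma \ref{lm:costV} on the circuit $\beta$ to obtain a lifting fiber of $V$, at cost $L(nd)^{O(1)}\delta_G^2$. Next, using Lemma \ref{l:3}, I would draw $r+1$ integer $(s\times s)$-matrices from $\mathcal{K}^{s\times s}$, with $\kappa=(4pd)^{2n}$, and check that they form a hitting sequence $(b_1,\dots,b_{r+1})$ for $V$ and $F$, together with associated $p$-minors $\bfs{\Delta}_1,\dots,\bfs{\Delta}_{r+1}$ of $F\cdot b_1,\dots,F\cdot b_{r+1}$. Then I apply Lemma \ref{lm:costW} to each $\bfs{\Delta}_j$ to obtain a lifting fiber of $V_{\bfs{\Delta}_j}$. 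By the covering property $W = V_{\bfs{\Delta}_1}\cup\cdots\cup V_{\bfs{\Delta}_{r+1}}$, no points of $W(a_r)$ are missed in the subsequent phase.

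For each $j$ with $V_{\bfs{\Delta}_j}\neq\emptyset$, I would perform a loop of length $r$. After a harmless renaming (setting $\Delta:=\bfs{\Delta}_j$ and writing $m_i$ for the corresponding $(s-i)$-minor of $T(a_{i+1})$), I invoke Lemma \ref{lm:costW1} to produce a lifting fiber of $W(a_1)_{\Delta\cdot m_1}$; if this is empty, Lemma \ref{l:2} guarantees that $W(a_r)_{\Delta}$ is empty as well and I move to the next $j$. Otherwise I iterate Lemma \ref{lm:costWi} for $i=1,\dots,r-1$, at each step either certifying emptiness of $W(a_{i+1})_\Delta$ or producing a lifting fiber of $W(a_{i+1})_{\Delta\cdot m_{i+1}}$. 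Lemma \ref{l:2} again ensures that the localisation by $m_i$ loses no irreducible component of $W(a_i)_\Delta$, so the $r$-th output, if nonempty, is in fact a lifting fiber of the zero-dimensional variety $W(a_r)_{\Delta}$. Gluing the outputs from the (at most $r+1$) branches by a standard union procedure for geometric resolutions of zero-dimensional varieties (for instance the routine underlying \cite[Algorithm~10]{GiLeSa01}) yields a geometric resolution of $W(a_r)=\bigcup_j W(a_r)_{\bfs{\Delta}_j}$ in $\A^n$.

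For the complexity, each of the $r+1$ branches requires at most $r+1$ calls of the form Lemma \ref{lm:costW1} or \ref{lm:costWi}, each costing $L(snd)^{O(1)}\delta^2$, and Lemmas \ref{lm:costV} and \ref{lm:costW} contribute once. Absorbing the polynomial factors $(r+1)^2$ in $n^{O(1)}$ gives the announced bound $L(snd)^{O(1)}\delta^2$. The worst-case estimate $(s(nd)^n)^{O(1)}$ then follows from the B\'ezout inequality $\delta=O((\max\{p,q\} d)^n)$. The error probabilities of Lemmas \ref{lm:costV}, \ref{lm:costW}, \ref{l:3}, \ref{lm:costW1} and \ref{lm:costWi} are all bounded by expressions of the form (a polynomial in the degrees)/(a pool size), and by inflating these pools polynomially they can be jointly brought below any prescribed constant, uniformly in the input. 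The main conceptual obstacle, already dispatched by Lemma \ref{l:2}, is to ensure that the successive localisations by the minors $m_i$ do not accidentally discard an irreducible component that later produces points of $W(a_r)$; once this is in hand, the argument is a bookkeeping of probabilities and circuit sizes.
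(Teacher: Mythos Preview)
Your proposal is correct and follows essentially the same approach as the paper: assemble Lemmas~\ref{lm:costV}, \ref{lm:costW}, \ref{lm:costW1}, \ref{lm:costWi} and the hitting sequence of Lemma~\ref{l:3}, loop over the $r+1$ charts $V_{\bfs{\Delta}_j}$, descend along the chain $W(a_1)\supseteq\cdots\supseteq W(a_r)$ using Lemma~\ref{l:2} to control the localisations, and merge the resulting zero-dimensional fibers. The only cosmetic difference is in the gluing step: the paper first brings all fibers to a common primitive element via \cite[Algorithm~6]{GiLeSa01} and then removes duplicates by gcd computations, whereas you invoke a generic union routine; both are standard and fit within the stated complexity.
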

\begin{proof}
  This result is essentially a consequence of Lemmas~\ref{lm:costV},
  \ref{lm:costW}, \ref{lm:costW1}, \ref{lm:costWi}. In fact, first we
  obtain for all $1\le j\le r+1$ lifting fibers of $W(a_r)_{\bfs{\Delta}_j}$.
  Then we change back the variables of
  the lifting fibers and find a primitive element common to all the
  fibers by means of~\cite[Algorithm~6]{GiLeSa01} at a total cost of
  $O((snd)^{O(1)}\delta^2)$.
  \par
  By means of classical greatest common divisor computations, we remove the
  points of $W(a_r)_{\bfs{\Delta}_2}$ that belong already to
  $W(a_r)_{\bfs{\Delta}_1}$. Then we remove the points of
  $W(a_r)_{\bfs{\Delta}_3}$ that belong already to
  $W(a_r)_{\bfs{\Delta}_1}$ and $W(a_r)_{\bfs{\Delta}_2}$. Recursively
  we remove the points of $W(a_r)_{\bfs{\Delta}_j}$ that belong
  already to $W(a_r)_{\bfs{\Delta}_k}$ for $k < j$. The total cost of
  these operations remains bounded by $O((nd)^{O(1)} \delta)$.
\end{proof}
\begin{remark}\label{r:1}
  For any $n, d, p, r, q, s, L, \delta \in \N$ with $r=n-q$ and $s\ge
  p+r$ the probabilistic algorithm of Theorem~\ref{t:4} may be
  realized by an algebraic computation tree of depth
  $L(snd)^{O(1)}\delta^2= (s(nd)^n)^{O(1)}$ that depends on parameters
  which may be chosen randomly. The proof of this statement requires a
  suitable refinement of Lemma~\ref{l:3} above in the spirit
  of~\cite[Theorem~4.4]{HeSc82}, which exceeds the scope of this
  paper.
\end{remark}
\subsection{Checking membership to a degeneracy locus}
Finally, we are going to consider the computational task to decide
for any $x\in \A^n$ and any $1\le i \le r$ whether $x$ belongs to
$W(a_i)$.
\begin{proposition}\label{p:2}
  Let notations and assumptions be as before, let $1\le i \le r$,
  and let $\Q[\alpha]$ be an algebraic extension of $\Q$ of degree
  $e$, given by the minimal polynomial of $\alpha$. Then, there exists
  a bounded error probabilistic algorithm $\mathcal{B}$ which, for any
  point $x \in \Q[\alpha]^n$, decides in sequential time $O (e
  (L+s^{O(1)}) \log^{O(1)} e)=(e s d^n)^{O(1)}$ whether
  $x$ belongs to $W(a_i)$.
  \par
  For any $n, d, p, r, q, s, L \in \N$ with $r=n-q$ and $s\ge
  p+r$ the probabilistic algorithm $\mathcal{B}$ may be realized by an
  essentially division-free arithmetic circuit of size
  $O(e (L+s^{O(1)}+n)^2 \log^{O(1)} e)=(e sd^n)^{O(1)}$
  that depends on parameters which may be chosen randomly.
\end{proposition}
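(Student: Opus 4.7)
The approach is to base the decision directly on the algebraic characterization in Lemma~\ref{l:0}: a point $x\in\A^n$ belongs to $W(a_i)$ if and only if $x\in V$, the polynomial $\det(F(x)\cdot U)\in\Q[\alpha][U_2,\ldots,U_s]$ is not identically zero, and the polynomial $\det(T(a_i)(x)\cdot U^{(i)})$ is identically zero. These two polynomial-identity conditions are the heart of the algorithm and will be tested by random specialization of $U_2,\ldots,U_s$ in the spirit of the Schwartz-Zippel lemma, which is where the bounded-error randomization of $\mathcal{B}$ enters.

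First I would run the essentially division-free circuit $\beta$ at $x$ to obtain $G_1(x),\ldots,G_q(x)$, $H(x)$ and all the entries of $F(x)$; if some $G_j(x)$ does not vanish or $H(x)=0$, I reject $x$ immediately since then $x\notin V$. Otherwise I draw values $t_2,\ldots,t_s$ uniformly at random from a finite subset $S\subset\Z$ with $|S|=\Theta(s)$, substitute them into $U$ and $U^{(i)}$, form the two specialized matrices $F(x)\cdot U|_{U_j=t_j}$ and $T(a_i)(x)\cdot U^{(i)}|_{U_j=t_j}$, compute their determinants over $\Q[\alpha]$, and answer ``$x\in W(a_i)$'' if and only if the first determinant is nonzero in $\Q[\alpha]$ while the second vanishes. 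Since $\det(F(x)\cdot U)$ has degree at most $p$ and $\det(T(a_i)(x)\cdot U^{(i)})$ degree at most $s-i+1\le s$ in the variables $U_j$, Schwartz-Zippel bounds the probability of a wrong decision by $O(s/|S|)$, which is made arbitrarily small by either enlarging $S$ or by standard amplification.

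For the cost analysis, evaluating $\beta$ at $x$ uses $L$ arithmetic operations in $\Q[\alpha]$, building the two specialized matrices costs $O(s^2)$ further such operations, and their determinants can be computed by an essentially division-free routine such as the Samuelson-Berkowitz algorithm in $s^{O(1)}$ operations in $\Q[\alpha]$. Each arithmetic step in $\Q[\alpha]\simeq \Q[Y]/(\mu(Y))$, where $\mu$ is the minimal polynomial of $\alpha$ of degree $e$, can be carried out with $O(e\log^{O(1)} e)$ scalar operations in $\Q$ by combining fast polynomial multiplication with fast modular reduction. Summing the costs yields the sequential time bound $O(e(L+s^{O(1)})\log^{O(1)} e)$; the worst-case estimate $(e s d^n)^{O(1)}$ then follows from the trivial bound $L=O(d^n)$ valid for dense representations.

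For the arithmetic-circuit realization, the random choices $t_2,\ldots,t_s$ are promoted to external parameters and all the subroutines above are composed into one single circuit; since the only divisions occurring are by coefficients of $\mu$ and by small integer constants internal to Samuelson-Berkowitz and to the fast polynomial routines, the resulting circuit is essentially division-free. The principal obstacle I expect is the careful bookkeeping needed to reach the claimed size bound $O(e(L+s^{O(1)}+n)^2\log^{O(1)} e)$: the quadratic factor arises from composing the outer evaluation circuit of size roughly $L+s^{O(1)}+n$ with the inner subcircuits that simulate $\Q[\alpha]$-arithmetic and compute the two determinants, and, wherever a division by a circuit-computed quantity would otherwise be convenient, from invoking Strassen's division-elimination technique to remain within the essentially division-free model. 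Once this bookkeeping is in place, both the sequential-time and the circuit-size bounds of Proposition~\ref{p:2} follow from the probabilistic analysis sketched above.
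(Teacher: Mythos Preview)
Your proposal is correct and follows essentially the same approach as the paper: both check membership in $V$ by evaluating the input circuit, then invoke Lemma~\ref{l:0} and test the two determinant conditions by a single random specialization $u$ of $U_2,\ldots,U_s$, with the cost analysis coming from fast arithmetic in $\Q[\alpha]$. The paper's write-up is terser---it does not name Schwartz--Zippel or Samuelson--Berkowitz explicitly and disposes of the circuit realization by a one-line citation of~\cite[Theorem~4.4]{HeSc82} rather than your direct construction---but the underlying argument is the same.
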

\begin{proof}
  Checking membership of $x$ to $V$ takes $O(L)$ operations in
  $\Q[\alpha]$. Each field operation in $\Q[\alpha]$ can be performed
  by $e\log^{O(1)} e$ operations in $\Q$. Lemma~\ref{l:0} justifies now
  the following probabilistic test whether $x\in V$ belongs to $W(a_i)$.
  With a high probability of
  success we can choose values $u_i$ for the variables $U_i$, so that
  if we write $u^{(i)}$ (resp. $u$) for the corresponding
  specialization of $U^{(i)}$ (resp. of $U$), the test becomes the
  verification of the conditions
  \begin{equation}
    \label{eq:testu}
    \det (F(x)\cdot u)\neq 0\quad\text{and}\quad\det (T(a_i)(x)\cdot
    u^{(i)})= 0.
  \end{equation}
  This leads to an additional cost of $e(L + s^{O(1)})\log^{O(1)} e$.
  \par
  The second part of Proposition~\ref{p:2} is a direct consequence
  of~\cite[Theorem~4.4]{HeSc82}. 
\end{proof}
\subsection{Example}
We are going to exemplify how our main algorithm runs on the
following example. Let $n:=3$, $q:=1$, $G_1:= X_1^2 + X_2^2 +
X_3^2$, $H := X_1 X_2 X_3$, $p:=1$, $s:=3$, $F_{1,1} := X_1$,
$F_{1,2}:= X_1 X_2 + X_2^2$, $F_{1,3}:= X_1 X_3$ and $a :=
\begin{bmatrix}
  1 & 2 & 3\\
  2 & 1 & 3
\end{bmatrix}$. The variety $V= \{G_1=0\}_H$ is smooth of dimension
$r:=2$. The algorithm starts representing a lifting fiber for $V$ in
the following way:
\begin{itemize}[itemsep=0mm]
\item $G_1$ as lifting system,
\item $Y_1:= X_1 - X_2$, $Y_2:= X_2$, $Y_3:= X_3$ as new
  coordinates,
\item $(-1,-1)$ as lifting point,
\item $u:= Y_3$ as primitive element,
\item $Q:= T^2 + 5$ as the minimal polynomial of $u$,
\item $v_3:= T$ as parameterization.
\end{itemize}
For the sake of simplicity, all the random choices made by the
Kronecker routines are kept simple throughout this example. We took
care to verify that they are generic enough to ensure the
correctness of the computations.
\par
As hitting sequence $b_1$, $b_2$, and $b_3$, we choose the identity
matrix and take $\bfs{\Delta}_1= X_1$, $\bfs{\Delta}_2= X_1 X_2 +
X_2^2$, $\bfs{\Delta}_3= X_1 X_3$. Since $V_{\bfs{\Delta}_1}=
V_{\bfs{\Delta}_3} = V$ holds, it is sufficient to carry out the
computations for $V_\Delta$, where $\Delta:= \bfs{\Delta}_1$. Hence
our lifting fiber is also a lifting fiber for $V_\Delta$.
\par
The lifting curve $V_\Delta \cap \{Y_1 = -1\}$ is described by the
following equations: $T^2 + 2 Y_2^2 - 2 Y_2 + 1 = 0$, $Y_3= T$, $Y_1=
1$. The intersection of this curve with $\det(T(a_1))= 3 X_1 + 3 X_1
X_2 + 3 X_2^2 - 3 X_1 X_3= 3 (Y_1 + Y_2) (1 + Y_2 - Y_3) + 3 Y_2^2$
leads to the following lifting fiber for $W(a_1)$:
\begin{itemize}[itemsep=0mm]
\item $G_1$, $\det(T(a_1))$ as lifting system,
\item $Y_1:= X_1 - X_2$, $Y_2:= X_2$, $Y_3:= X_3$ as new coordinates,
\item $(-1)$ as lifting point,
\item $u:= Y_2$ as primitive element,
\item $Q:= 6 T^4 - 6 T^3 + 3 T^2 - 4 T + 2$ as the minimal
  polynomial,
\item $v_2:= T$, $v_3:= -6 T^3 - T + 3$ as parameterization.
\end{itemize}
We verify that none of the points of this fiber annihilates $H$,
$\Delta$ or $m_1:= 2 X_1 - X_1 X_2 - X_2^2$. Hence this lifting
fiber is also a lifting fiber for $W(a_1)_{\Delta \cdot m_1}$.
\par
The lifting curve for $W(a_1)_{\Delta \cdot m_1}$ is described by the
equations: $P(T):= 6 T^4 + (10 Y_1 + 4) T^3 + (8 Y_1^2 + 6 Y_1 + 1)
T^2 + (4 Y_1^3 + 2 Y_1^2 + 2 Y_1) T + Y_1^4 + Y_1^2= 0$, $Y_2= T$,
$P'(T) Y_3= (-8 Y_{1} + 4) T^3 + (-14 Y_1^2 + 10 Y_1) T^2 + (-10 Y_1^3
+ 8 Y_1^2) T - 2 Y_1^4 + 2 Y_1^3$.  The intersection of this curve
with the hypersurface $\{m_1 = 0\}$ yields the following set of
points:
\begin{eqnarray*}
\{ Y_1^5 + 4 Y_1^4 + 31 Y_1^3 + 72 Y_1^2 + 198 Y_1 = 0,\\
   Y_2= \frac{-1}{198} Y_1^4 + \frac{7}{198} Y_1^3 - \frac{1}{22}
   Y_1^2 + \frac{3}{22} Y_1,\\
   Y_3= \frac{-1}{66} Y_1^4 - \frac{2}{33} Y_1^3 - \frac{31}{66} Y_1^2 -
\frac{12}{11} Y_1 \}.
\end{eqnarray*}
We observe that $(0,0,0)$ is the only point of this set that
annihilates $\Delta$ or $H$. Therefore the lifting fiber for
$W(a_2)_{\Delta \cdot m_2}$ we find is represented by:
\begin{itemize}[itemsep=0mm]
\item $G_1$, $\det(T(a_1))$, $m_1$ as lifting system,
\item $Y_1:= X_1 - X_2$, $Y_2:= X_2$, $Y_3:= X_3$ as new coordinates,
\item $u:= Y_1$ as primitive element,
\item $Q:= T^4 + 4 T^3 + 31 T^2 + 72 T + 198$  as the minimal
  polynomial,
\item $v_1:= T$, $v_2:= \frac{1}{18} T^3 + \frac{1}{9} T^2 + \frac{1}{2}
  T + 1$, $v_3:= 3$ as parameterization.
\end{itemize}
\par
We have implemented our main algorithm within the \texttt{C++}
library \texttt{geomsolvex} of
\textsc{Mathemagix}~\cite{Mathemagix}. In fact this implementation
uses the strategy described in~\cite[Section~7.3]{GiLeSa01}: we
first choose a suitable prime number $p$ that fits a machine word,
compute the degeneracy locus modulo $p$ and then lift the geometric
resolution in order to recover the solutions over the rational
numbers.
\section{Applications}
\label{s:3}
In this section we complete the examples of Subsections~\ref{s:ex1}
and~\ref{s:ex4}. The two other examples of Section~\ref{s:ex} may be
adapted in a straightforward way to the context of
Theorem~\ref{t:4}. We refrain from presenting details.
\subsection{Polar varieties}\label{s:polarvariety}
We consider first a somewhat modified version of the example of
Subsection~\ref{s:ex1}. Let $n\in \N$, $1\le p \le n$ and $r:=n-p$ and
let $G_1\klk G_p$ be a reduced regular sequence of polynomials of
$\Q[\xon]$. We suppose that these polynomials are given by an
essentially division-free arithmetic circuit in $\Q[\xon]$ of size $L$. From
Lemma~\ref{l:3} we deduce that we may choose a hitting sequence
$(b_1\klk b_{r+1})$ of regular matrices of $\Z^{n\times n}$ for
$\{G_1=0\klk G_p=0\}$ and the restriction of the Jacobian $J(G_1\klk
G_p)$ to this variety. This yields $p$-minors $\bfs{\Delta}_1\klk
\bfs{\Delta}_{r+1}$ of $J(G_1\klk G_p)\cdot b_1\klk J(G_1\klk
G_p)\cdot b_{r+1}$ such that
\[
\bigcup_{1\le j \le r+1}\{G_1=0\klk G_p=0\}_{\bfs{\Delta}_j}
\]
is the regular locus of $\{G_1=0\klk G_p=0\}$.
\par
Let $H:=\sum_{1\le j \le r+1}\bfs{\Delta}_j^2$ and assume that
\[
\Gamma:=\{G_1=0\klk G_p=0\}\cap \R^n
\]
is nonempty, smooth and compact. Let $V:=\{G_1=0\klk G_p=0\}_H$ and
let $F$ be the restriction of $J(G_1\klk G_p)$ to $V$. Then $V$ is
nonempty, equidimensional of dimension $r$, smooth and contains
$\Gamma$. From~\cite[Proposition~1]{BaGiHePa04} or
\cite[Proposition~1]{BaGiHePa05}, we conclude that, for $a\in
\Q^{r\times n}$ generic, $W(a_r)$ contains for each connected
component of $\Gamma$ a real point.  Let $\delta$ be the system
degree of $G_1=0\klk G_p=0$, $H\neq 0$ and $J(G_1\klk G_p)$. Then
Theorem~\ref{t:4} implies that we can compute a sample point for any
connected component of $\Gamma$ in time $L(nd)^{O(1)}\delta^2 =
(nd)^{O(n)}$. This result improves the complexity bound
of~\cite[Theorem~11]{BaGiHePa04} and~\cite[Theorem~13]{BaGiHePa05}
by a factor of $\binom{n}{p}$.
\subsection{Dominant endomorphisms of affine spaces}
We treat now the example of Subsection~\ref{s:ex3} in the spirit of
Theorem~\ref{t:4}. Let $F_1\klk F_n$ be in $\Q[\xon]$ such that
$\iFon$ defines a bi-rational endomorphism of $\A^n$. Suppose that
$\Fon$ are given by an essentially division-free arithmetic circuit
in $\Q[\xon]$ of size $L$. Let $\alpha=(\alpha_1\klk \alpha_n)\in
\Q^n$ be generic. Then Theorem~\ref{t:4} can be used to compute a
geometric solution of the polynomial equation system
$F_1-\alpha_1=0\klk F_n-\alpha_n=0$ in time $L(nd)^{O(1)}\delta^2 =
(nd)^{O(n)}$, where $\delta$ is the degree of the point finding
problem associated with $(\A^n, [F_1\klk F_n,1])$. The main outcome
of this result is that we may consider this degree as a natural
invariant of the endomorphism of $\A^n$ defined by $(\Fon)$.
\subsection{Timings}
In this final subsection we report on timings obtained with our
software \texttt{geomsolvex}. For $n:= 3$, we consider the following
infinite family of examples, which are parametrized by an integer $N
\ge 1$. For any $1 \leq j \leq N$, let $S_j := (X_1- 4j)^2 + X_2^2 +
X_3^2 - 1$, $p := 1$, and $G_1 := S_1 \cdots S_N - \epsilon$, where
$\epsilon := 1/1000000$. It is clear that $\Gamma:=\{ G_1 = 0 \} \cap
\R^n$ is compact. On the other hand, the gradient of $G_1$ is given by
\begin{eqnarray*}
  \frac{\partial G_1}{\partial X_1} &=& 2 (G_1 + \epsilon)
  \left(\frac{X_1 - 4}{S_1} + \frac{X_1 - 8}{S_2} + \cdots +
    \frac{X_1 - 4N}{S_N}\right),\\
  \frac{\partial G_1}{\partial X_2} &=&  2 X_2 (G_1 + \epsilon)
  \left(\frac{1}{S_1} + \frac{1}{S_2} + \cdots + \frac{1}{S_N} \right),\\
  \frac{\partial G_1}{\partial X_3} &=&  2 X_3 (G_1 + \epsilon)
  \left(\frac{1}{S_1} + \frac{1}{S_2} + \cdots + \frac{1}{S_N} \right).\\
\end{eqnarray*}
We observe that $\frac{1}{S_1} + \frac{1}{S_2} + \cdots +
\frac{1}{S_N}$ does not vanish on $\Gamma$. In fact the terms of this
sum are necessarily positive on $\Gamma$ since the open balls defined
by $S_j < 0$, $1 \leq j \leq N$, are all disjoint and $S_1 S_2 \cdots
S_N$ is positive on $\Gamma$. Hence $\Gamma$ is smooth at any point
$(x_1,x_2,x_3) \in \Gamma$ with $x_2 \neq 0$ or $x_3 \neq 0$. Thus on
singular points of $\Gamma$ the discriminant of the univariate
polynomial $G_1(X_1,0,0)$ vanishes. Taking this in mind we verified by
a simple computation that $\Gamma$ has no singular point for the
values of $N$ considered in our timings.
\par
In order to make dependent the equation $G_1=0$ from generic
coordinates, we replaced the variables $X_1$,
$X_2$ and $X_3$ by $3 X_1 + 5 X_2 + 7 X_3$, $X_1 - X_2 + X_3$, and $-
X_1 + 2 X_2 + 5 X_3$ respectively. Finally for $a$ we took $
\left(\begin{matrix}
    1 & 17 & 7\\
    11 & 23 & 13
\end{matrix}\right).$
We used our software \texttt{geomsolvex} described in
Section~\ref{s:polarvariety} and computed at least one point per
connected component of $\Gamma$. Timings are reported in
Table~\ref{tab:timings}. We used the \textsc{SVN} revision number~8738
of \textsc{Mathemagix} and compared with version~3.21 of the
\textsc{RAGLib} library developed in \textsc{Maple}~(TM) by M.~Safey
El Din~\cite{raglib}, which in its turn relies on the \textsc{FGb}
version~1.58 of J.-C.~Faug{\`e}re~\cite{fgb}. Our platform uses one
core of an Intel(R) Xeon(R) CPU X5650 at 2.67~GHz and disposes of
48~GB. We observed that \textsc{RAGLib} is much faster in small
input sizes. Nevertheless its cost increases faster than the one of our
probabilistic algorithm.
\begin{table}[t]
  \centering
  \begin{tabular}{|l|c|c|c|c|c|c|c|c|}
    \hline
    N & 3 & 4 & 5 & 6 & 7 & 8 & 9 \\
    \hline
    \textsc{Mathemagix} & 30 & 79 & 174 & 383 & 729 & 1380 & 2250 \\
    \hline
    \textsc{RAGLib} & 1.2 & 3.1 & 19 & 126 & 748 & 3202 & 13021 \\
    \hline
  \end{tabular}
  \caption{Timings for polar varieties, in seconds}
  \label{tab:timings}
\end{table}
\par
\medskip
\noindent
\textbf{Acknowledgment.} The authors wish to thank Antonio Campillo
(Valladolid, Spain) for stimulating conversations on the subject
of this paper.
%
%
\providecommand{\bysame}{\leavevmode\hbox
to3em{\hrulefill}\thinspace}
\providecommand{\MR}{\relax\ifhmode\unskip\space\fi MR }
\providecommand{\MRhref}[2]{%
  \href{http://www.ams.org/mathscinet-getitem?mr=#1}{#2}
} \providecommand{\href}[2]{#2}

\end{document}